\documentclass
[
    fontsize = 11 pt,
    american,
    captions = tableheading,
    numbers = noenddot,
    abstracton,
    paper = letter,
]
{scrartcl}

%


\usepackage[T1]{fontenc}
\usepackage[utf8]{inputenc}
\usepackage
[
    babel = true, 
]
{microtype}           
\usepackage{csquotes} 


\usepackage[dvipsnames]{xcolor} 

\definecolor{stroke1}{HTML}{2574A9} 


\usepackage{amsmath}
\usepackage{amssymb}
\usepackage{amsthm}
\usepackage{thmtools}
\usepackage{mathtools}
\usepackage{thm-restate}
\usepackage{dsfont}        
\usepackage{braceMnSymbol} 


\usepackage
[
    ttscale = 0.85, 
]
{libertine} 
\usepackage
[
    libertine,    
    slantedGreek, 
    vvarbb,       
    libaltvw,     
]
{newtxmath} 
\usepackage{url} 
\usepackage{bm}  


\usepackage{graphicx} 
\usepackage
[
    subrefformat = simple, 
    labelformat = simple,  
]
{subcaption}         


\usepackage{array}     
\usepackage{booktabs}  
\usepackage{longtable} 
\usepackage{pdflscape} 


\usepackage[shortlabels]{enumitem} 


\usepackage
[
    ruled,         
    vlined,        
    linesnumbered, 
]
{algorithm2e} 


\usepackage{xspace}   
\usepackage
[
    shortcuts, 
]
{extdash}             
\usepackage{setspace} 


\usepackage{xparse}    
\usepackage{footnote}  
\usepackage{afterpage} 
\usepackage
[
    textsize = scriptsize, 
]
{todonotes}            


\usepackage
[
    sortcites,              
    style = alphabetic,     
    defernumbers,           
    safeinputenc,           
    backref = true,         
    backrefstyle = three,   
    hyperref = true,        
    maxbibnames = 99,       
    maxcitenames = 3,       
    minalphanames = 3,       
]
{biblatex} 

\DefineBibliographyStrings{english}%
{%
    backrefpage  = {\lowercase{s}ee page}, 
    backrefpages = {\lowercase{s}ee pages} 
}

\addbibresource{references.bib}


\usepackage{multirow}
\usepackage{caption}
\captionsetup
{
    font = small,
    labelfont = {bf, sf},
    format = plain,
    singlelinecheck = off,
}


\usepackage
[
    bookmarks = true,                 
    bookmarksopen = false,            
    bookmarksnumbered = true,         
    pdfstartpage = 1,                 
    colorlinks = true,                
    allcolors = stroke1,              
]
{hyperref} 

\usepackage
[
    noabbrev,   
    nameinlink, 
]
{cleveref} 

%


\date{}

\makeatletter
    \def\IfEmptyTF#1%
    {%
        \if\relax\detokenize{#1}\relax%
            \expandafter\@firstoftwo%
        \else%
            \expandafter\@secondoftwo%
        \fi%
    }
\makeatother

\NewDocumentCommand{\mathOrText}{m}
{%
    \ensuremath{#1}\xspace%
}

\let\originalleft\left
\let\originalright\right
\renewcommand{\left}{\mathopen{}\mathclose\bgroup\originalleft}
\renewcommand{\right}{\aftergroup\egroup\originalright}

\makeatletter
    \DeclareRobustCommand{\bfseries}%
    {%
        \not@math@alphabet\bfseries\mathbf%
        \fontseries\bfdefault\selectfont%
        \boldmath%
    }

\xspaceaddexceptions{]\}}

\urlstyle{rm}

\allowdisplaybreaks

\crefname{ineq}{inequality}{inequalities}
\creflabelformat{ineq}{#2{\upshape(#1)}#3}

\crefname{term}{term}{terms}
\creflabelformat{term}{#2{\upshape(#1)}#3}

\crefname{cond}{condition}{conditions}
\creflabelformat{cond}{#2{\upshape(#1)}#3}

\crefname{assume}{assumption}{assumptions}
\creflabelformat{assume}{#2{\upshape(#1)}#3}


\deffootnote[1.2 em]{1.2 em}{0 em}{\makebox[1.2 em][l]{\textbf{\thefootnotemark}}}

\let\oldfootnote\footnote

\newlength{\spaceBeforeFootnote} 
\newlength{\spaceAfterFootnote}  

\RenewDocumentCommand{\footnote}{o o o m}%
{%
    \IfNoValueTF{#1}%
    {%
        \oldfootnote{#4}%
    }%
    {%
        \setlength{\spaceBeforeFootnote}{\IfEmptyTF{#1}{0}{#1} em}%
        \IfNoValueTF{#2}%
        {%
            \hspace*{\spaceBeforeFootnote}\oldfootnote{#4}%
        }%
        {%
            \setlength{\spaceAfterFootnote}{\IfEmptyTF{#2}{0}{#2} em}%
            \hspace*{\spaceBeforeFootnote}\IfNoValueTF{#3}{\oldfootnote{#4}}{\oldfootnote[#3]{#4}}\hspace*{\spaceAfterFootnote}%
        }%
    }%
}

\makesavenoteenv{figure}
\makesavenoteenv{table}
\makesavenoteenv{tabular}


\declaretheoremstyle
[
   	spaceabove = \topsep,
   	spacebelow = \topsep,
   	headfont = \bfseries,
   	headformat = \textcolor{stroke1}{$\blacktriangleright$} \NAME~\NUMBER \NOTE,
   	notefont = \bfseries,
   	notebraces = {(}{)},
   	bodyfont = \normalfont,
   	postheadspace = 0.5 em,
   	qed = \textcolor{stroke1}{\bfseries$\blacktriangleleft$},
]
{myTheoremStyle}


\declaretheorem
[
   	style = myTheoremStyle,
   	name = Lemma,
    sharenumber = conjecture,
]
{lemma}
\declaretheorem
[
   	style = myTheoremStyle,
   	name = Corollary,
    sharenumber = conjecture,
]
{corollary}
\declaretheorem
[
   	style = myTheoremStyle,
   	name = Theorem,
    sharenumber = conjecture,
]
{theorem}


\NewDocumentCommand{\functionTemplate}{m m m m o}%
{%
    \IfNoValueTF{#5}%
    {%
        \mathOrText{#1\left#2{#4}\right#3}%
    }%
    {%
        \mathOrText{#1#5#2{#4}#5#3}%
    }%
}

\newcommand*{\leftBracketType}{(}
\newcommand*{\rightBracketType}{)}

\NewDocumentCommand{\createFunction}{m m o o}%
{%
    \renewcommand*{\leftBracketType}{\IfNoValueTF{#3}{(}{#3}}%
    \renewcommand*{\rightBracketType}{\IfNoValueTF{#4}{)}{#4}}%
    \NewDocumentCommand{#1}{o o}%
    {%
        \IfNoValueTF{##1}%
        {%
            \mathOrText{#2}%
        }%
        {%
            \functionTemplate{#2}{\leftBracketType}{\rightBracketType}{##1}[##2]%
        }%
    }%
}

\DeclareDocumentCommand{\probabilisticFunctionTemplate}{m m O{} o}
{%
    \functionTemplate{#1}%
    {\lbrack}%
    {\rbrack}%
    {#2\IfEmptyTF{#3}{}{\ \IfNoValueTF{#4}{\left}{#4}\vert\ \vphantom{#2}#3\IfNoValueTF{#4}{\right.}{}}}%
    [#4]%
}



\newcommand*{\N}{\mathOrText{\mathds{N}}}

\newcommand*{\R}{\mathOrText{\mathds{R}}}

\newcommand*{\indicatorFunctionSymbol}{\mathds{1}}


\RenewDocumentCommand{\Pr}{m O{} o}%
{%
    \probabilisticFunctionTemplate{\mathrm{Pr}}{#1}[#2][#3]%
}

\NewDocumentCommand{\E}{m O{} o}%
{%
    \probabilisticFunctionTemplate{\mathds{E}}{#1}[#2][#3]%
}

\NewDocumentCommand{\Var}{m O{} o}%
{%
    \probabilisticFunctionTemplate{\mathrm{Var}}{#1}[#2][#3]%
}


\DeclareDocumentCommand{\bigO}{m o}%
{%
    \functionTemplate{\mathrm{O}}{(}{)}{#1}[#2]%
}

\DeclareDocumentCommand{\smallO}{m o}%
{%
    \functionTemplate{\mathrm{o}}{(}{)}{#1}[#2]%
}

\DeclareDocumentCommand{\bigTheta}{m o}%
{%
    \functionTemplate{\upTheta}{(}{)}{#1}[#2]%
}

\DeclareDocumentCommand{\bigOmega}{m o}%
{%
    \functionTemplate{\upOmega}{(}{)}{#1}[#2]%
}

\DeclareDocumentCommand{\smallOmega}{m o}%
{%
    \functionTemplate{\upomega}{(}{)}{#1}[#2]%
}



\DeclareDocumentCommand{\eulerE}{o}%
{%
    \mathOrText{\mathrm{e}\IfNoValueTF{#1}{}{^{#1}}}%
}



\DeclareDocumentCommand{\poly}{m o}%
{%
    \functionTemplate{\mathrm{poly}}{(}{)}{#1}[#2]%
}

\createFunction{\id}{\mathrm{id}}

\NewDocumentCommand{\ind}{m o o}%
{%
    \IfNoValueTF{#2}%
    {%
        \mathOrText{\indicatorFunctionSymbol_{#1}}%
    }%
    {%
        \functionTemplate{\indicatorFunctionSymbol_{#1}}{(}{)}{#2}[#3]%
    }%
}

\DeclareDocumentCommand{\dom}{m o}%
{%
    \functionTemplate{\mathrm{dom}}{(}{)}{#1}[#2]%
}

\DeclareDocumentCommand{\rng}{m o}%
{%
    \functionTemplate{\mathrm{rng}}{(}{)}{#1}[#2]%
}

\DeclareDocumentCommand{\d}{o}%
{%
    \mathrm{d}\IfNoValueTF{#1}{}{^{#1}}%
}

\DeclareDocumentCommand{\set}{m m o}%
{%
    \mathOrText{\IfNoValueTF{#3}{\left}{#3}\{#1\ \IfNoValueTF{#3}{\left}{#3}\vert\ \vphantom{#1}#2\IfNoValueTF{#3}{\right.}{}\IfNoValueTF{#3}{\right}{#3}\}}%
}


\DeclareDocumentCommand{\randomProcess}{m o}
{%
    \mathOrText{X^{(#1)}\IfNoValueTF{#2}{}{_{#2}}}%
}

\DeclareDocumentCommand{\transformedProcess}{o}
{%
    \mathOrText{Y\IfNoValueTF{#1}{}{_{#1}}}%
}

\DeclareDocumentCommand{\filtration}{o}
{%
    \mathOrText{\mathcal{F}\IfNoValueTF{#1}{}{_{#1}}}%
}

\newcommand*{\numberOfVertices}{\mathOrText{n}}

\newcommand*{\infectionRate}{\mathOrText{\lambda}}

\newcommand*{\infectionExponent}{\mathOrText{\alpha}}
\newcommand*{\infectionThreshold}{\mathOrText{\beta}}

\newcommand*{\contactProcess}{\mathOrText{C}}

\newcommand*{\infectedDiscrete}[1]{\mathOrText{I_{\timeContinuous{#1}}}}

\newcommand*{\timeDiscrete}{\mathOrText{t}}
\newcommand*{\timeContinuous}[1]{\mathOrText{\tau_{#1}}}

\DeclareDocumentCommand{\lyapunovHelper}{o}
{%
    \mathOrText{f\IfNoValueTF{#1}{}{(#1)}}%
}

\DeclareDocumentCommand{\lyapunovFunction}{m o}
{%
    \mathOrText{F\IfNoValueTF{#2}{}{(#2)}}%
}

\DeclareDocumentCommand{\potentialFunctionIMinusR}{o}
{%
    \mathOrText{H\IfNoValueTF{#1}{}{(#1)}}%
}

\title{From Market Saturation to Social Reinforcement: Understanding the Impact of Non-Linearity in Information Diffusion Models}

\author{%
    Tobias Friedrich$^{*}$ \and Andreas Göbel$^{*}$ \and Nicolas Klodt$^{*}$ \and Martin~S. Krejca$^{\dagger}$ \and Marcus Pappik$^{*}$
}

\publishers
 {
  	\footnotesize $^{*}$ Hasso Plattner Institute, University of Potsdam, Potsdam, Germany\\
  	\{tobias.friedrich, andreas.goebel, nicolas.klodt, marcus.pappik\}@hpi.de \\[1 ex]

  	$^{\dagger}$ LIX, CNRS, Ecole Polytechnique, Institut Polytechnique de Paris, Palaiseau, France \\
  	martin.krejca@polytechnique.edu
 }

\begin{document}

\maketitle

\begin{abstract}
Diffusion of information in networks is at the core of many problems in AI. Common examples include the spread of ideas and rumors as well as marketing campaigns. Typically, information diffuses at a non-linear rate, for example, if markets become saturated or if users of social networks reinforce each other's opinions. Despite these characteristics, this area has seen little research, compared to the vast amount of results for linear models, which exhibit less complex dynamics. Especially, when considering the possibility of re-infection, no fully rigorous guarantees exist so far.

We address this shortcoming by studying a very general non-linear diffusion model that captures saturation as well as reinforcement. More precisely, we consider a variant of the  SIS model in which vertices get infected at a rate that scales polynomially in the number of their infected neighbors, weighted by an infection coefficient $\infectionRate$. We give the first fully rigorous results for thresholds of $\infectionRate$ at which the expected survival time becomes super-polynomial. For cliques we show that when the infection rate scales sub-linearly, the threshold only shifts by a poly-logarithmic factor, compared to the standard SIS model. In contrast, super-linear scaling changes the process considerably and shifts the threshold by a polynomial term. For stars, sub-linear and super-linear scaling behave similar and both shift the threshold by a polynomial factor.
Our bounds are almost tight, as they are only apart by at most a poly-logarithmic factor from the lower thresholds, at which the expected survival time is logarithmic.
\end{abstract}

\newpage

\section{Introduction}
\label{sec:introduction}

Information diffusion processes on graphs are widely studied in the area of AI \cite{kong2023online,SunCM2023SocialInfluenceMaximization,JiangRF2023PoliticalLearning,RazaqueRKAR2022Survey,yan2019maximization,wilder2018end,ikeda2016examination} and in other domains, modeling various graph processes, such as spread of infections \cite{Pastor-SatorrasCVMV15Survey,leskovec2007cost} and computer viruses \cite{berger2005spread,BorgsCGS10Antidote}, social influence and the spread of ideas~\cite{kempe2003maximizing}, and viral marketing campaigns \cite{agarwal2008blogosphere}.

Commonly, information diffusion processes are modeled as epidemiological models over networks (see \cite{Pastor-SatorrasCVMV15Survey} for an extensive survey). In these models, each vertex of the host network is in a state, such as \emph{susceptible} or \emph{infected}, and transitions between these states at variable rates that depend on the states of all vertices in the network. A very prominently studied epidemiological model is the \emph{SIS model}. In this model, each susceptible vertex becomes infected by each of its infected neighbors independently with a system-wide infection rate of $\infectionRate \in \R_{> 0}$, and each infected vertex turns susceptible independently with a normalized rate of~$1$.

The SIS-model includes the possibility for vertices to re-infect after recovering from the infection. That accounts, for example, for infections that do not grant immunity~\cite{newman2003structure} or for bloggers that can post the same message multiple times~\cite{leskovec2007patterns}. Therefore, it is possible for the infection to stay active for a very long time by infecting the same vertices over and over again. The quantity that measures how long the process contains infected vertices is known as the \emph{survival time} and marks an important property for networks in the SIS model.

Due to its relevance, the survival time of the standard SIS model has been studied extensively for decades both empirically~\cite{Pastor-SatorrasCVMV15Survey} as well as mathematically rigorously, for the latter on infinite~\cite[e.g.,][]{Harris74,Liggett96InfiniteBinaryTrees,NamNS22SISinfinite} and on finite graphs~\cite[e.g.,][]{berger2005spread,ganesh2005effect,BorgsCGS10Antidote}.
Combined, these results show for a large variety of different finite graph classes a sharp transition, with respect to the infection rate~\infectionRate, from a survival time logarithmic in the graph size to one that is super-polynomial.
The regime of~\infectionRate where this change occurs is known as the \emph{epidemic threshold}. This regime is mostly independent of the starting configuration of the process as long as at least one vertex starts infected.

In the majority of the epidemiology models studied, vertices get infected at a rate that scales linearly with the number of their infected neighbors. However, experiments have shown that there are processes that cannot be modeled with such simple assumptions~\cite{hodas2014simple,monsted2017evidence}. For example, when behavior is spread over social networks, there is a social reinforcement effect that leads to much higher adoption rates when the number of infected neighbors is high~\cite{Centola2010SocialReinforcement}. Similar effects can be observed for biological contagions~\cite{StOnge2021UniversalNI}. In some cases the opposite effect happens, for example, during market saturation, when trying to convince customers to buy a product~\cite{kong2023online}.

As non-linear infection rates are prominent in natural processes, it is important to study how they alter the insights that have been gained for linear processes.
To this end, we consider an altered version of the standard SIS model, parameterized by an \emph{infection exponent} $\infectionExponent \in \R_{> -1}$.
In this version, susceptible vertices do not get infected by their neighbors independently but, instead, a susceptible vertex with~$i$ infected neighbors is infected with a rate of $\infectionRate i^{1 + \infectionExponent}$. Note that we call $\infectionRate$ the \emph{infection coefficient}, and for $\infectionExponent=0$, it coincides with the infection rate of the standard SIS model.

This model as well as generalizations and variants thereof have already been theoretically studied~\cite{StOnge2021NonlinearInfectionsHypergraphs,zhang2015health,hethcote1991some,liu1986influence}. These works use mean-field theory, simplifying the original process by making approximation assumptions. In these works the host graph is assumed to be a clique in order to derive differential equations that model the dynamics of the process. This results in a threshold for the infection coefficient at which the simplified process changes from having the all-susceptible state as unique global equilibrium to having two equilibria from which one is not the all-susceptible state and is globally stable. This threshold gives an estimate for the epidemic threshold on cliques. To the best of our knowledge, no fully rigorous results on the epidemic threshold exist so far.

\paragraph{Our Contribution}

\begin{table}[]
    \centering
    \caption{Our threshold results for the infection coefficient $\infectionRate$ in the modified SIS process with infection rate $\infectionRate I^{1+\infectionExponent}$, where $I$ is the number of infected neighbors of a vertex. The table gives regimes for~$\infectionRate$ depending on the host graph structure, the number of vertices $\numberOfVertices$ and the infection exponent $\infectionExponent$, in which the expected survival time is logarithmic or super-polynomial in $\numberOfVertices$ respectively. The case $\infectionExponent =0$ corresponds to the known results for the standard SIS model.}
    \label{table:results}
    \begin{tabular}{l l l}
        \toprule
        & $T\in \bigO{\log(\numberOfVertices)}$ & $T\geq \numberOfVertices^{\smallOmega{1}}$\\
        \midrule
        \textbf{Clique} & &\\
         \, $\infectionExponent<0$&$\infectionRate \in \bigO{\numberOfVertices^{-1}}$ & $\infectionRate \in \smallOmega{\numberOfVertices^{-1}\log( n)^{-\infectionExponent}}$\\
         \, $\infectionExponent>0$ & $\infectionRate \in \smallO{\numberOfVertices^{-1-\infectionExponent}}$ & $\infectionRate \in\smallOmega{\numberOfVertices^{-1-\infectionExponent} \log( n)^{\infectionExponent}}$ \\
         \, $\infectionExponent=0$ & $\infectionRate\leq \numberOfVertices^{-1}$ \cite{ganesh2005effect} & $\infectionRate\geq (1+\varepsilon)\numberOfVertices^{-1}$ \cite{ganesh2005effect} \footnote{Note that \cite{ganesh2005effect} states a slightly better bound. However, the bound follows from Corollary 4.1 which needs parameter $r$ to be smaller than 1 uniformly in $n$. That is not fulfilled for the given clique bound. We adjusted the bound for this condition to be fulfilled.} \\
         \textbf{Star} & \\  & $\infectionRate\in\bigO{\numberOfVertices^{-\frac{1}{2}-\frac{\infectionExponent}{2(2+\infectionExponent)}}}$ & $\infectionRate\in\smallOmega{\numberOfVertices^{-\frac{1}{2}-\frac{\infectionExponent}{2(2+\infectionExponent)}}\log (\numberOfVertices)^{\frac{4}{1+\infectionExponent}}}$\\
        \bottomrule
    \end{tabular}
\end{table}

We analyze the epidemic threshold of a SIS variant with non-linear infection rates in a rigorous mathematical manner.
To the best of our knowledge, these are the first results that study the process in the non-linear setting without any simplifications.
We prove both upper and lower bounds for the epidemic threshold for cliques (\Cref{cor:cliqueSubLinear,cor:cliqueSuperLinear}, assuming one initially infected vertex) and for stars (\Cref{cor:star}, assuming the center to be initially infected).
Further, our results encompass settings with sub-linear as well as super-linear infection rates in case of a constant infection exponent~\infectionExponent.
In all cases, our upper and lower bounds of the epidemic threshold are almost tight, i.e. different by at most poly-logarithmic factors. Our results are summarized in \Cref{table:results}. Note that in this setting the survival time increases monotonically when adding extra vertices and edges, hence our lower bounds carry over to graphs with large clique or star subgraphs.

For cliques of size~$n$, we see a clear difference between the sub-linear and the super-linear setting.
For sub-linear infection rates (\Cref{cor:cliqueSubLinear}, $\infectionExponent \in (-1, 0)$), the epidemic threshold remains similar to the linear setting~\cite[Section~V.C]{ganesh2005effect} and increases from $1/n$ to $\smallOmega{\log (n)^{-\infectionExponent}/n}$.

For the super-linear setting for cliques of size~$n$ (see \Cref{cor:cliqueSuperLinear}), the infection exponent~\infectionExponent of the model also has an impact on the power of~$n$.
The infection survives already, in expectation, a time exponential in~$n$ once $\infectionRate \in \smallOmega{n^{-1 - \infectionExponent} \log (n)^{\infectionExponent}}$, decreasing the threshold from the linear setting by a factor in the order of $\log(n) / n^{\infectionExponent}$. However, the survival time has a very high variance, in the sense that the process survives exponentially long with a probability at most super-polynomial in $-n$ (see \Cref{lem:star_positive_p}). This long survival time occurs once the process hits a critical mass of infected vertices. In contrast, the process in the linear model survives exponentially long, already with a probability linear in $n^{-1}$.

For stars with~$n$ leaves, we provide a unified bound for the sub- and super-linear setting (see \Cref{cor:star}).
In both cases, the epidemic threshold deviates roughly by a factor of $n^{-\infectionExponent / (2 (2 + \infectionExponent))}$ from the linear threshold of $n^{-1/2}$~\cite[Theorems~$5.1$ and~$5.2$]{ganesh2005effect}.
This shows that the effect of changing the infection rate is also well pronounced when deviating from the linear setting.

Overall, on stars, changing the infection rate has a strong impact on the epidemic threshold in all settings.
On cliques, the impact is strong when the infection rate scales super-linearly, but the effect is far less prominent in a sub-linear scaling.

\section{Preliminaries}
\label{sec:preliminaries}
We consider a variation of the SIS model in which the rate at which vertices get infected scales non-linearly in the number of its infected neighbors. The process is defined as follows.

Let $G=(V,E)$ be a finite, undirected graph with vertex set $V$ and edge set $E$. Further let $\infectionRate \in \R_{>0}$ and $\infectionExponent \in \R_{>-1}$. A contact process $C$ with infection coefficient $\infectionRate$ and infection exponent $\infectionExponent$ on $G$ is a continuous-time Markov process over partitions of $V$ into susceptible vertices and infected vertices. The transition of the process is decided by Poisson point processes on the vertices, which we call Poisson clocks. Each infected vertex has a Poisson clock with rate $1$ which when it triggers \emph{heals} it and moves it to the susceptible set. Each susceptible vertex has a Poisson clock with a variable rate. For every vertex $v\in V$ and time $\timeDiscrete \in \R_{\geq 0}$, let $N_{\timeDiscrete,v}$ be the number of infected neighbors of $v$. Each susceptible vertex $v$ has a Poisson clock that infects it at rate $\infectionRate N_{\timeDiscrete,v}^{1+\infectionExponent}$. Note that we restrict $\infectionExponent$ to be larger than $-1$ as otherwise the clock rate is either not defined or strictly positive for susceptible vertices with no infected neighbors. We also assume $\infectionExponent$ to be constant in the number of vertices.

We aim to calculate the \emph{survival time} of the contact process, the first point in time at which the process reaches the only absorbing state, which is the the state in which all vertices are susceptible.

In our proofs we sometimes consider the discrete version of that process in which one step is exactly one trigger of a clock. That means that in one step exactly one vertex heals or one vertex gets infected. We use the fact that while at least one vertex is infected all clocks together trigger at a rate of at least 1 and at most polynomial to transfer the bounds from the discrete version to the continuous process.

We use stochastic domination to transfer results from one random variable to another. We say that a random variable $(X_t)_{t \in \R}$ \emph{dominates} another random variable $(Y_t)_{t \in \R}$ if and only if there exists a coupling $(X'_t, Y'_t)_{t \in \R}$ in a way such that for all $t \in \R_{\geq 0}$ we have $X'_t \geq Y'_t$. For example for two contact processes $C$ and $C'$ that only differ in the fact that $C$ has a higher infection coefficient, the number of infected vertices in $C$ dominates the number of infected vertices in $C'$ as the processes can be coupled in a way such that all healing clocks trigger at the same time and infections in $C'$ imply infections in $C$ at the same time. The domination then directly implies that the survival time of $C$ dominates the survival time of $C'$, so the survival time in our model increases monotonically with the infection coefficient.

When we say that some event happens asymptotically almost surely (a.a.s.) that means that for increasing number of vertices in the considered graph the event happens with a probability of $1- \smallO{1}$.

Some of the processes that we analyze are very similar to the well-known gambler's ruin problem, as they increase and decrease by one with certain probabilities until they reach a limit in either direction. We consider the following version of the gambler's ruin problem.

\begin{theorem}[Gambler's ruin~{\cite[page~$345$]{feller1957introduction}}]\label{pre:gamblersRuin}
Let $(P_t)_{t \in \N}$ be the amount of money that a player has in a gambler's ruin game that has a probability of $p \neq 1/2$ for them to win in each step. Let $q=1-p$. The game ends at time $T$ when the player either reaches the lower bound $l$ or the upper bound $u$ of money. Then
\begin{enumerate}
\item $\Pr{P_T = l} = \frac{1-(p/q)^{u-P_0}}{1-(p/q)^{u-l}}$;
\item $\Pr{P_T = u} = \frac{1-(q/p)^{P_0-l}}{1-(q/p)^{u-l}}$.\qedhere
\end{enumerate}
\end{theorem}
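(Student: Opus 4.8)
The plan is to treat $(P_t)_{t \in \N}$ as a finite Markov chain on the state space $\{l, l+1, \dots, u\}$ in which $l$ and $u$ are the only absorbing states, and to compute the absorption probabilities by first-step analysis. For $l \le k \le u$, write $a_k \defeq \Pr{P_T = l}[P_0 = k]$ for the probability of being ruined, i.e.\ reaching the lower bound $l$ before the upper bound $u$, when starting with capital $k$. Conditioning on the outcome of the first bet gives, for every interior state $l < k < u$, the second-order linear recurrence $a_k = p\, a_{k+1} + q\, a_{k-1}$, together with the boundary values $a_l = 1$ and $a_u = 0$.

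First I would solve this recurrence with the standard characteristic-equation method. Substituting $a_k = r^k$ yields the characteristic polynomial $p r^2 - r + q = 0$; since $p + q = 1$ its discriminant equals $(p-q)^2$, so its roots are exactly $r = 1$ and $r = q/p$, and these are distinct precisely because $p \neq 1/2$. Hence the general solution has the form $a_k = A + B (q/p)^k$ for constants $A, B$. Substituting the two boundary conditions gives a $2 \times 2$ linear system for $A$ and $B$; solving it and simplifying the resulting ratio (dividing numerator and denominator by the appropriate power of $q/p$ and rewriting $(q/p)^{-m} = (p/q)^{m}$) produces $a_{P_0} = \frac{1 - (p/q)^{u - P_0}}{1 - (p/q)^{u-l}}$, which is the first claimed identity.

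For the second identity I would proceed symmetrically: define $b_k \defeq \Pr{P_T = u}[P_0 = k]$, which satisfies the same recurrence but with boundary values $b_l = 0$ and $b_u = 1$. The identical computation then gives $b_{P_0} = \frac{1 - (q/p)^{P_0 - l}}{1 - (q/p)^{u-l}}$. Alternatively, one can obtain the second formula from the first by verifying the algebraic identity $a_{P_0} + b_{P_0} = 1$, which reflects that the chain is absorbed in finite time almost surely.

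The one point that genuinely needs care --- and the only real obstacle --- is justifying that $T < \infty$ almost surely, so that $a_l = 1$ and $a_u = 0$ are legitimate boundary conditions and no probability mass escapes to ``never stopping''. This is immediate here because the state space is finite, each of $l$ and $u$ is absorbing, and every interior state is transient: from any interior state there is a uniformly positive probability of reaching $\{l, u\}$ within $u - l$ steps, so absorption occurs with probability $1$ and the two absorption probabilities must sum to $1$. The remaining work is purely the routine algebraic simplification of the solved recurrence into the stated closed forms.
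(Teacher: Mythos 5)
Your proof is correct; the paper itself gives no proof of this statement but simply cites it from Feller, and your first-step analysis with the characteristic equation $pr^2 - r + q = 0$ (roots $1$ and $q/p$, distinct since $p \neq 1/2$) is exactly the classical difference-equation derivation found in that reference. The boundary-value algebra and the justification that $T < \infty$ almost surely are both handled properly, so there is nothing to add.
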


Wald's equation helps us calculate the survival time by partitioning the process into phases and then bounding the number and length of those phases.

\begin{theorem}[Wald's equation~{\cite[page~$346$]{mitzenmacher2017probability}}]\label{pre:waldsEquation}
Let $X_1,X_2,...$ be nonnegative, independent, identically distributed random variables with distribution $X$. Let $T$ be a stopping time for this sequence. If $T$ and $X$ have bounded expectation, then

\begin{align*}
    \E{\sum_{i=1}^{T}{X_i}}= \E{T}\cdot \E{X}.
\end{align*}
\end{theorem}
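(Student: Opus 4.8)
The plan is to expand the stopped sum using indicator variables and then exploit the defining property of a stopping time to decouple each summand $X_i$ from the event that stopping has not yet occurred. First I would write the pointwise identity
\[
    \sum_{i=1}^{T} X_i = \sum_{i=1}^{\infty} X_i \ind{T \geq i},
\]
which holds because $\ind{T \geq i}$ equals $1$ precisely for those indices $i$ that actually contribute to the stopped sum. Since every $X_i$ is nonnegative, taking expectations and interchanging expectation with the infinite sum is justified by the monotone convergence theorem, yielding $\E{\sum_{i=1}^{T} X_i} = \sum_{i=1}^{\infty} \E{X_i \ind{T \geq i}}$.

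The key step is to show that $X_i$ and $\ind{T \geq i}$ are independent for every $i$. Here I would use that $\{T \geq i\}$ is the complement of $\{T \leq i - 1\}$, and that, by the definition of a stopping time, the latter event is determined by $X_1, \dots, X_{i-1}$ alone, i.e. $\{T \geq i\} \in \sigma(X_1, \dots, X_{i-1})$. As the $X_j$ are mutually independent, $\ind{T \geq i}$ is therefore independent of $X_i$, so that $\E{X_i \ind{T \geq i}} = \E{X_i} \Pr{T \geq i} = \E{X} \Pr{T \geq i}$, where the final equality uses that all $X_i$ share the common distribution $X$.

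Substituting this back and pulling the constant $\E{X}$ out of the sum gives $\E{\sum_{i=1}^{T} X_i} = \E{X} \sum_{i=1}^{\infty} \Pr{T \geq i}$. To finish, I would invoke the tail-sum identity $\E{T} = \sum_{i=1}^{\infty} \Pr{T \geq i}$, valid for the nonnegative integer-valued stopping time $T$, which produces the claimed equality $\E{\sum_{i=1}^{T} X_i} = \E{T} \cdot \E{X}$. The hypotheses that $X$ and $T$ have bounded expectation ensure that the right-hand side is finite and that every quantity manipulated above is well defined.

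The main obstacle — and the only point where the stopping-time assumption is genuinely needed — is the independence claim. A naive argument fails if $T$ is allowed to inspect $X_i$ before deciding whether to stop at step $i$, since then $\ind{T \geq i}$ and $X_i$ would be correlated and the factorization would break down. I would therefore take care to state explicitly that $\{T \geq i\}$ lies in $\sigma(X_1, \dots, X_{i-1})$, which is exactly the measurability condition encoded in the definition of a stopping time. The nonnegativity of the $X_i$ is the second essential ingredient, as it licenses the unconditional interchange of sum and expectation without any additional integrability bookkeeping.
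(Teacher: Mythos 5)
Your argument is correct and is the standard proof of Wald's equation; the paper itself does not prove this statement but cites it from Mitzenmacher and Upfal, and your decomposition $\sum_{i=1}^{T} X_i = \sum_{i=1}^{\infty} X_i \ind{T \geq i}$ followed by the independence of $X_i$ from $\{T \geq i\} \in \sigma(X_1, \dots, X_{i-1})$ and the tail-sum formula is essentially the textbook argument. You correctly identify the two load-bearing hypotheses (nonnegativity for the interchange of sum and expectation, and the stopping-time measurability for the factorization), so there is nothing to add.
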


The following theorem bounds the expected value of the maximum of $n$ exponentially distributed random variables which helps as bounding the time until all vertices heal at least once.

\begin{theorem}[{\cite[page~$33$]{mitzenmacher2017probability}}]\label{pre:maxExponential}
Let $n \in \N_{> 0}$, and let $\{X_i\}_{i \in [n]}$ be independent random variables that are each exponentially distributed with parameter $\lambda \in \R_{> 0}$. Let $m = \max_{i \in [n]} X_i$, and let $H_n$ be the $n$-th harmonic number. Then
\begin{align*}
\E{m} &= \frac{H_n}{\lambda} < \frac{1 + \ln(n+1)}{\lambda}.\qedhere
\end{align*}
\end{theorem}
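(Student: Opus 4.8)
The plan is to identify the exact law of $m$ and integrate, using the memorylessness of the exponential distribution as the organizing principle. First I would note that the minimum $X_{(1)} \defeq \min_i X_i$ satisfies $\Pr{X_{(1)} > t} = \prod_{i} \Pr{X_i > t} = \eulerE[-\numberOfVertices \lambda t]$, so $X_{(1)}$ is exponential with parameter $\numberOfVertices \lambda$. By memorylessness, once the smallest variable has been realized the remaining $\numberOfVertices - 1$ variables are again i.i.d.\ exponential with parameter $\lambda$; hence the spacing $X_{(2)} - X_{(1)}$ is exponential with parameter $(\numberOfVertices - 1)\lambda$ and independent of $X_{(1)}$. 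Iterating, the $k$-th spacing $X_{(k)} - X_{(k-1)}$ is exponential with parameter $(\numberOfVertices - k + 1)\lambda$, where $X_{(0)} \defeq 0$.

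Writing $m = X_{(\numberOfVertices)} = \sum_{k=1}^{\numberOfVertices}\left(X_{(k)} - X_{(k-1)}\right)$ and applying linearity of expectation then yields
\begin{align*}
\E{m} = \sum_{k=1}^{\numberOfVertices}\frac{1}{(\numberOfVertices - k + 1)\lambda} = \frac{1}{\lambda}\sum_{j=1}^{\numberOfVertices}\frac{1}{j} = \frac{H_{\numberOfVertices}}{\lambda},
\end{align*}
which is the claimed identity. Should one prefer to avoid the spacing argument, the same value follows by a direct computation: from $\Pr{m \le t} = (1 - \eulerE[-\lambda t])^{\numberOfVertices}$ one gets $\E{m} = \int_0^\infty \left(1 - (1-\eulerE[-\lambda t])^{\numberOfVertices}\right)\d t$, and the substitution $u = \eulerE[-\lambda t]$ combined with the factorization $1 - (1-u)^{\numberOfVertices} = u\sum_{j=0}^{\numberOfVertices-1}(1-u)^j$ reduces the integral to $\frac{1}{\lambda}\sum_{j=0}^{\numberOfVertices-1}\frac{1}{j+1} = \frac{H_{\numberOfVertices}}{\lambda}$.

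For the stated estimate I would compare the harmonic sum with an integral. Since $x^{-1}$ is decreasing, $1/k \le \int_{k-1}^{k} x^{-1}\,\d x$ for every $k \ge 2$, so $H_{\numberOfVertices} = 1 + \sum_{k=2}^{\numberOfVertices} 1/k \le 1 + \int_1^{\numberOfVertices} x^{-1}\,\d x = 1 + \ln(\numberOfVertices)$. As $\ln(\numberOfVertices) < \ln(\numberOfVertices + 1)$, this gives $H_{\numberOfVertices} < 1 + \ln(\numberOfVertices + 1)$, and dividing through by $\lambda$ delivers the upper bound.

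I do not expect a genuine obstacle, as this is a classical fact and every step is routine. The only point that deserves a moment of care is the justification of the spacing decomposition in the first paragraph: one must verify that the post-minimum residual lifetimes are again independent and $\mathrm{Exp}(\lambda)$-distributed, and that the successive spacings are mutually independent, which is precisely where the memorylessness property is invoked. If one wishes to sidestep this entirely, the direct integral computation is fully self-contained and relies only on the substitution and the elementary factorization noted above.
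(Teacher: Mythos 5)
The paper does not prove this statement; it is quoted as a preliminary from Mitzenmacher and Upfal, and your spacing-plus-memorylessness argument (with the harmonic-sum integral bound) is exactly the standard textbook derivation of $\E{m} = H_n/\lambda < (1+\ln(n+1))/\lambda$. Your proof is correct, and the alternative direct computation via $\int_0^\infty \left(1-(1-\eulerE[-\lambda t])^{n}\right)\d t$ is a valid self-contained fallback.
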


We use Chernoff bounds to bound the value of binomially distributed random variables.

\begin{theorem}[{\cite[Theorem~$4.4$, Theorem~$4.5$]{mitzenmacher2017probability}}]\label{pre:chernoff}
Let $X_1,...,X_n$ be independent Poisson trials, $X= \sum_{i=1}^n{X_i}$, $\mu = \E{X}$ and $\delta \in (0,1)$. Then
\begin{enumerate}
    \item $\Pr{X \geq (1+\delta)\mu}\leq e^{-\mu\delta^2/3}$,

    \item $\Pr{X \leq (1-\delta)\mu}\leq e^{-\mu\delta^2/2}$.
\end{enumerate}
\end{theorem}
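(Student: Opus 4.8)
The plan is to derive both tail bounds by the classical exponential-moment (Chernoff--Bernstein) method: apply Markov's inequality to an exponential transform of $X$ and then optimize a free parameter. Write $p_i \defeq \Pr{X_i = 1}$, so that $\mu = \sum_{i=1}^n p_i$. Because the $X_i$ are independent, the moment generating function factorizes, and for every $t \in \R$ each factor obeys $\E{\eulerE[t X_i]} = 1 + p_i(\eulerE[t] - 1) \leq \eulerE[p_i(\eulerE[t] - 1)]$, where the inequality is $1 + x \leq \eulerE[x]$. Multiplying over $i$ gives the single estimate $\E{\eulerE[t X]} \leq \eulerE[\mu(\eulerE[t] - 1)]$, which drives both parts.

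For the upper tail I would fix $t > 0$ and bound
\begin{align*}
\Pr{X \geq (1 + \delta)\mu} = \Pr{\eulerE[t X] \geq \eulerE[t(1 + \delta)\mu]} \leq \frac{\E{\eulerE[t X]}}{\eulerE[t(1 + \delta)\mu]} \leq \frac{\eulerE[\mu(\eulerE[t] - 1)]}{\eulerE[t(1 + \delta)\mu]}.
\end{align*}
Setting $t = \ln(1 + \delta)$, the minimizer of the right-hand side, collapses this to $\bigl(\eulerE[\delta] / (1 + \delta)^{1 + \delta}\bigr)^{\mu}$. For the lower tail I would instead transform by $\eulerE[-t X]$ with $t > 0$, obtaining $\Pr{X \leq (1 - \delta)\mu} \leq \eulerE[\mu(\eulerE[-t] - 1)] / \eulerE[-t(1 - \delta)\mu]$, and the choice $t = -\ln(1 - \delta) > 0$ yields the analogous $\bigl(\eulerE[-\delta] / (1 - \delta)^{1 - \delta}\bigr)^{\mu}$.

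It remains to compress these raw bounds into the stated form, which reduces to two purely analytic inequalities on $(0,1)$. The lower tail is the easier direction: after taking logarithms one needs $-\delta - (1 - \delta)\ln(1 - \delta) \leq -\delta^2/2$, which follows directly from the expansion $\ln(1 - \delta) = -\sum_{k \geq 1} \delta^k / k$ since the surviving terms are all non-positive. I expect the upper-tail inequality $\delta - (1 + \delta)\ln(1 + \delta) \leq -\delta^2/3$ to be the main obstacle, because the series for $\ln(1 + \delta)$ alternates in sign and the remainder must be controlled more carefully. I would treat it by setting $f(\delta) \defeq \delta - (1 + \delta)\ln(1 + \delta) + \delta^2/3$, noting $f(0) = 0$ and $f'(\delta) = \tfrac{2}{3}\delta - \ln(1 + \delta)$, and showing $f'(\delta) \leq 0$ on $(0,1)$ by a further derivative (the single sign change of $f''$ together with $f'(0) = 0$ and $f'(1) < 0$), so that $f$ is non-increasing and hence $f \leq 0$. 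Bounding the two bases by $\eulerE[-\delta^2/3]$ and $\eulerE[-\delta^2/2]$ and raising to the power $\mu$ then gives exactly the two claimed bounds.
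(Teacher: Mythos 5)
Your proposal is correct and is essentially the standard exponential-moment proof from the cited source (Mitzenmacher and Upfal, Theorems 4.4 and 4.5): the paper itself only quotes this result without proof, and your derivation—bounding the moment generating function via $1+x\leq \eulerE[x]$, optimizing the Markov bound at $t=\ln(1+\delta)$ and $t=-\ln(1-\delta)$, and then reducing the two bases to $\eulerE[-\delta^2/3]$ and $\eulerE[-\delta^2/2]$ by the series expansion and the second-derivative argument—matches that reference's treatment. No gaps.
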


\section{Clique}
\label{sec:clique}
In the standard SIS process on a clique there exists a number of infected vertices at which vertices heal and get infected at the same rate, called the equilibrium point. For the contact process with nonlinear infection rate, depending on whether the scaling is sub- or super-linear, this equilibrium is attracting or repelling, respectively. A sub-linear scaling leads to an attracting equilibrium, which yields a threshold close to $1/\numberOfVertices$ (see \Cref{cor:cliqueSubLinear}). A super-linear scaling leads to a repelling equilibrium which makes it very unlikely to reach it. Hence, the infection always dies out fast a.a.s. as shown in \Cref{lem:star_positive_p}. However, there still is a threshold above which the expected survival time is super-polynomial as if the infection crosses the equilibrium, the survival time becomes extremely large. The threshold is roughly $\numberOfVertices^{-1-\infectionExponent}$ (see \Cref{cor:cliqueSuperLinear}) which is different to the sub-linear case.

\subsection{Sub-linear scaling}

When the infection rate scales sub-linearly, there is an equilibrium that is attracting. Also, when being in a state with a number of infected vertices that is a constant factor away from the equilibrium, it is already twice as likely to go towards the equilibrium than going away from it in each step. That means that the survival time is exponential in the equilibrium value. Hence, we get a threshold at the point where this exponential becomes super-polynomial in $\numberOfVertices$. We first show the lower bound on the expected survival time.

\begin{theorem}\label{thm:cliqueSubLinExp}
Let $G$ be a clique with $\numberOfVertices \in \N_{>0}$ vertices. Further, let $\contactProcess$ be a contact process with infection coefficient $\infectionRate \in \R_{>0}$ and infection exponent $\infectionExponent \in (-1,0)$ on $G$ that starts with exactly one infected vertex. Let $T$ be the survival time of $\contactProcess$. If $(\infectionRate \numberOfVertices)^{-1/\infectionExponent} \leq \numberOfVertices/2$ and $(\infectionRate \numberOfVertices/4)^{-1/\infectionExponent} \geq 2$, then $\E{T} \in \bigOmega{2^{(\infectionRate \numberOfVertices)^{-1/\infectionExponent}}/\numberOfVertices}$.
\end{theorem}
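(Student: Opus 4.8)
The plan is to pass to the embedded discrete-time chain $(X_\timeDiscrete)_\timeDiscrete$ that records the number of infected vertices, where one step is a single clock trigger. On the clique with $i$ infected vertices the total infection rate is $(\numberOfVertices-i)\infectionRate i^{1+\infectionExponent}$ and the total healing rate is $i$, so a step increases $X_\timeDiscrete$ with probability $p_i=(\numberOfVertices-i)\infectionRate i^{1+\infectionExponent}/\bigl((\numberOfVertices-i)\infectionRate i^{1+\infectionExponent}+i\bigr)$ and decreases it with probability $q_i=1-p_i$; in particular $p_i/q_i=(\numberOfVertices-i)\infectionRate i^{\infectionExponent}$. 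Setting $p_i=q_i$ locates the attracting equilibrium at $i^\ast\approx(\infectionRate\numberOfVertices)^{-1/\infectionExponent}$. First I would use the two hypotheses to pin down the biased regime: the assumption $(\infectionRate\numberOfVertices)^{-1/\infectionExponent}\le\numberOfVertices/2$ guarantees $\numberOfVertices-i\ge\numberOfVertices/2$ on the relevant states, and then $(\infectionRate\numberOfVertices/4)^{-1/\infectionExponent}\ge 2$ yields a range $1\le i\le m$ with $m=\Theta(i^\ast)\ge 2$ on which $p_i\ge 2q_i$, i.e.\ the walk is at least twice as likely to move toward the equilibrium as away from it.

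The survival bound then rests on two gambler's-ruin estimates via \Cref{pre:gamblersRuin}. Since $q_i/p_i\le 1/2$ on the biased range, I would set up an explicit coupling dominating $(X_\timeDiscrete)_\timeDiscrete$ from below by a homogeneous walk with down/up ratio $1/2$. Starting from the single initially infected vertex (state $1$), this comparison shows the walk reaches the equilibrium level $m$ before hitting $0$ with probability at least roughly $1/2$. Conditioned on reaching $m$, I would lower-bound the expected number of steps until extinction: to reach $0$ the walk must cross the entire up-biased region, and by \Cref{pre:gamblersRuin} each excursion starting near the equilibrium reaches $0$ only with probability exponentially small in the width of that region. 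Hence the expected number of discrete steps before extinction is $2^{\Omega(i^\ast)}$.

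To turn a step count into real time I would invoke Wald's equation (\Cref{pre:waldsEquation}). On every state with $i\le i^\ast\le\numberOfVertices/2$ the total clock rate $(\numberOfVertices-i)\infectionRate i^{1+\infectionExponent}+i$ is at most $\numberOfVertices$, because at and below the equilibrium the infection rate is $\bigO{i^\ast}\le\numberOfVertices$; consequently each step's holding time stochastically dominates an independent exponential random variable of rate $\numberOfVertices$ and mean $1/\numberOfVertices$. Applying \Cref{pre:waldsEquation} to the random (stopping-time) number of such holding times converts the lower bound on the expected number of steps into $\E{T}\in\bigOmega{2^{(\infectionRate \numberOfVertices)^{-1/\infectionExponent}}/\numberOfVertices}$, after multiplying in the constant probability of first reaching the equilibrium region.

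I expect the main obstacle to be the quantitative escape estimate from the equilibrium region. The bias $p_i/q_i=(\numberOfVertices-i)\infectionRate i^{\infectionExponent}$ degrades to $1$ as $i\uparrow i^\ast$, so the strongly biased ($\ge 2$) part of the state space is only a constant fraction of $[0,i^\ast]$; extracting the full exponent $(\infectionRate\numberOfVertices)^{-1/\infectionExponent}$ requires tracking the product $\prod_i q_i/p_i$ over the descent rather than invoking a single uniform ratio, together with control of the boundary behaviour near state $1$ (where one unlucky healing step ends the process) and near $i^\ast$ (where one must keep $\numberOfVertices-i=\Theta(\numberOfVertices)$). The state-dependence of $p_i,q_i$ also forces the homogeneous-walk comparison to be realized by an explicit coupling, and the Wald step relies on the uniform polynomial bound on the total rate holding on every visited state.
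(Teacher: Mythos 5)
Your proposal follows essentially the same route as the paper's proof: locate the equilibrium $i^\ast=(\infectionRate\numberOfVertices)^{-1/\infectionExponent}$, use the hypothesis $(\infectionRate\numberOfVertices)^{-1/\infectionExponent}\le\numberOfVertices/2$ to get a uniform bias of at least $2$ toward the equilibrium on $[1,(\infectionRate\numberOfVertices/4)^{-1/\infectionExponent}]$, apply \Cref{pre:gamblersRuin} to obtain a constant probability of reaching that level from a single infected vertex and an exponentially small extinction probability per excursion from it, and convert the resulting $2^{\bigOmega{(\infectionRate\numberOfVertices)^{-1/\infectionExponent}}}$ expected excursions into real time via a $1/\numberOfVertices$ bound and Wald's equation (\Cref{pre:waldsEquation}). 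The only real difference is the time conversion—the paper observes that each return to the top level forces a healing event, which fires at rate at most $\numberOfVertices$ in \emph{every} state, whereas your bound of $\numberOfVertices$ on the total clock rate is only valid for states with at most $i^\ast$ infected vertices, so you should restrict the Wald sum to holding times of steps taken in that region (of which there are still $2^{\bigOmega{i^\ast}}$)—and your closing remark about the degrading bias correctly pinpoints why both arguments deliver the exponent only up to the constant factor coming from replacing $\infectionRate\numberOfVertices$ by $\infectionRate\numberOfVertices/4$.
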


\begin{proof}
We show that while there are at most $(\infectionRate\numberOfVertices/4)^{-1/\infectionExponent}$ infected vertices, the probability of infecting a new vertex is at least twice as high as the probability to heal one in the next step. Therefore the process dominates a gambler's ruin instance with a biased coin of probability $2/3$, which has an expected exponential time to reach its lower bound.

Let $c \in \R_{\geq 1}$. Consider a state with $I=(\infectionRate\numberOfVertices/c)^{-1/\infectionExponent}$ infected vertices. From this state, vertices heal at a rate of $I$ and because $(\infectionRate \numberOfVertices)^{-1/\infectionExponent} \leq \numberOfVertices/2$, new vertices get infected at a rate of

\begin{align*}
    \infectionRate I^{1+\infectionExponent}(\numberOfVertices-I) &\geq \infectionRate I^{1+\infectionExponent}\numberOfVertices/2\\
    &= \infectionRate I(\infectionRate\numberOfVertices/c)^{-\infectionExponent/\infectionExponent}\numberOfVertices/2\\
    &=\frac{c}{2}I.
\end{align*}

Note that for $c\geq 4$, the rate at which new vertices get infected is at least twice as high as the rate at which vertices heal. Hence, while there are at most $I=(\infectionRate\numberOfVertices/4)^{-1/\infectionExponent}$ infected vertices, the discrete version of the contact process dominates a gambler's ruin instance $A$ with a biased coin with probability $2/3$.

For all $i\in \N, i<I$, let $p_i$ be the probability that starting with $i$ infected vertices, the infection dies out before reaching $I$ infected vertices. We get using \Cref{pre:gamblersRuin}
\begin{align*}
    p_1&\leq \frac{1-2^{I-1}}{1-2^{I}}\\
    &= \frac{2^{I-1}-1}{2^{I}-1}\\
    &\leq \frac{1}{2}
\end{align*}

and

\begin{align*}
    p_{I-1}&\leq \frac{1-2^{1}}{1-2^{I}}\\
    &= \frac{1}{2^{I}-1}.
\end{align*}

Starting with one infected vertex, the probability to reach $I$ infected vertices before the infection dies out is $1-p_1 \geq 1/2$. From that point on, the number of times that the infection reaches $I$ infected vertices again from below dominates a geometric random variable $X$ with success probability $p_{I-1}\leq \frac{1}{2^{I}-1}$. Note that between each of those times, a vertex has to heal, which happens at a rate of at most $\numberOfVertices$. That means that the expected time between two of those events is at least $1/\numberOfVertices$. Together with Wald's Equation (\Cref{pre:waldsEquation}), that gives us $\E{T} \geq 1/2 \cdot \E{X}/\numberOfVertices$. Plugging in the expected value of $X$ concludes the proof.
\end{proof}

We get a very similar result for the upper bound by upper bounding the probability to increase the number of infected vertices in the next step instead of lower bounding it.

\begin{restatable}{theorem}{cliqueUpper}
Let $G$ be a clique with $\numberOfVertices \in \N_{>0}$ vertices. Further, let $\contactProcess$ be a contact process with infection coefficient $\infectionRate \in \R_{>0}$ and infection exponent $\infectionExponent \in (-1,0)$ on $G$ that starts with exactly one infected vertex. Let $T$ be the survival time of $\contactProcess$. If $\infectionRate\numberOfVertices\geq2$, then $\E{T} \in \bigO{(\infectionRate \numberOfVertices)^{(\infectionRate \numberOfVertices)^{-1/\infectionExponent}}}$.
\end{restatable}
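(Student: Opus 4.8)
The plan is to mirror the lower-bound argument of \Cref{thm:cliqueSubLinExp}, but now to \emph{upper} bound the probability of gaining an infected vertex, so as to show that the infection dies out after not too many excursions. Let $I^*$ denote (the integer part of) the equilibrium level $(\infectionRate\numberOfVertices)^{-1/\infectionExponent}$, i.e.\ the level at which the healing rate $i$ and the infection rate $\infectionRate i^{1+\infectionExponent}(\numberOfVertices-i)$ balance; I restrict attention to the interesting regime where this level lies below $\numberOfVertices$, the other case being weaker and handled a fortiori. Below $I^*$ the process has an upward drift, so the only route to the absorbing all-susceptible state is a rare downward excursion from near $I^*$ all the way to $0$. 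The survival time is therefore governed by how many such excursions are needed and how long each one costs, and the aim is to bound the former by $\bigO{(\infectionRate\numberOfVertices)^{I^*}}$ and the latter by a polynomial that can be absorbed.

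First I would control the per-step bias. In the discrete version of the process, at a state with $i\in\{1,\dots,I^*\}$ infected vertices the ratio of the infection rate to the healing rate is $\infectionRate i^{\infectionExponent}(\numberOfVertices-i)\le\infectionRate\numberOfVertices$, using $i^{\infectionExponent}\le 1$ (as $\infectionExponent<0$) and $\numberOfVertices-i\le\numberOfVertices$. Hence the probability $p_i$ of gaining an infected vertex satisfies $p_i/(1-p_i)\le\infectionRate\numberOfVertices$ throughout $[1,I^*]$, so on this range the number of infected vertices is stochastically dominated by a gambler's ruin walk with the \emph{fixed} win probability $P\defeq\infectionRate\numberOfVertices/(\infectionRate\numberOfVertices+1)$; this is the step where we "upper bound the probability to increase''. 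Applying \Cref{pre:gamblersRuin} to this dominating walk with lower boundary $0$, upper boundary $I^*$, and starting point $I^*-1$ shows that it reaches $0$ before $I^*$ with probability of order $(\infectionRate\numberOfVertices)^{-(I^*-1)}$, and by the domination the true process reaches $0$ before returning to $I^*$ with at least this probability.

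Next I would count excursions and bound their cost. At level $I^*$ the infection rate does not exceed the healing rate, so the process steps down to $I^*-1$ with probability at least $1/2$; combined with the previous paragraph, each visit to $I^*$ leads to extinction before the next return with probability $\bigOmega{(\infectionRate\numberOfVertices)^{-(I^*-1)}}$. Thus the number of returns to $I^*$ before the infection dies out is stochastically dominated by a geometric random variable of mean $\bigO{(\infectionRate\numberOfVertices)^{I^*-1}}$. Since, while at least one vertex is infected, all clocks together fire at rate at least $1$, each discrete step costs expected time at most $1$, and an excursion (downward steps pushed back by the drift below $I^*$ or absorbed at $0$, upward steps pushed back by the drift above $I^*$) consists of at most $\poly{\numberOfVertices}$ steps in expectation. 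Either the infection dies before ever reaching $I^*$, contributing only polynomial time, or it reaches $I^*$ and we assemble the two bounds via \Cref{pre:waldsEquation}, yielding $\E{T}\le\bigO{(\infectionRate\numberOfVertices)^{I^*-1}}\cdot\poly{\numberOfVertices}$.

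The main obstacle I anticipate is not the probabilistic bias bound but the bookkeeping that turns this last estimate into the clean $\bigO{(\infectionRate\numberOfVertices)^{(\infectionRate\numberOfVertices)^{-1/\infectionExponent}}}$: one must absorb the polynomial factor, coming both from the excursion length and from summing visit counts over the $\poly{\numberOfVertices}$ many levels above the equilibrium, into the exponent. The crude fixed-$P$ domination leaves no slack for this, so the extinction probability of a downward excursion should instead be estimated through the telescoped ratio $\prod_{m=1}^{I^*-1}\infectionRate m^{\infectionExponent}(\numberOfVertices-m)$, whose extra factors $m^{\infectionExponent}\le 1$ make it genuinely smaller than $(\infectionRate\numberOfVertices)^{I^*-1}$; exploiting exactly this $m^{\infectionExponent}\le 1$ saving to swallow the polynomial while still matching the stated exponent is the delicate part. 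A secondary subtlety is that the transition probabilities are state dependent, so the fixed-probability \Cref{pre:gamblersRuin} may only be invoked after the stochastic-domination step and the drift above $I^*$ must be quantified to guarantee that upward excursions neither wander too far nor linger too long before returning.
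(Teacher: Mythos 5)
Your skeleton matches the second half of the paper's argument: dominate the number of infected vertices on $[0,I^*]$ by a gambler's-ruin walk with win probability $\infectionRate\numberOfVertices/(\infectionRate\numberOfVertices+1)$, deduce an extinction probability of order $(\infectionRate\numberOfVertices)^{-I^*}$ per downward excursion, and multiply the geometric number of excursions by the cost of one excursion. The genuine gap is exactly the one you flag yourself: you only bound the cost of an excursion by $\poly{\numberOfVertices}$ steps, ending with $\E{T}\in\bigO{(\infectionRate\numberOfVertices)^{I^*-1}}\cdot\poly{\numberOfVertices}$, and this is strictly weaker than the claimed bound. The regime that kills it is $\infectionRate\numberOfVertices\in\bigTheta{1}$ (e.g.\ $\infectionRate\numberOfVertices=2$), where $I^*$ and hence $(\infectionRate\numberOfVertices)^{I^*}$ are constants, so the theorem asserts $\E{T}\in\bigO{1}$ --- which is what \Cref{cor:cliqueSubLinear} needs --- while your bound only gives $\poly{\numberOfVertices}$. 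Your proposed repair, sharpening the extinction probability via the telescoped product $\prod_m \infectionRate m^{\infectionExponent}(\numberOfVertices-m)$, does not close this: the extra saving $\prod_m m^{-\infectionExponent}\le (I^*!)^{-\infectionExponent}$ is itself only a constant when $I^*$ is bounded, so it cannot swallow a $\poly{\numberOfVertices}$ factor precisely where that factor hurts.

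The paper's fix attacks the excursion length instead of the excursion count, and gets it down to $\bigO{1}$ expected steps. Two ingredients make this work. First, the phase boundary is placed not at the equilibrium $(\infectionRate\numberOfVertices)^{-1/\infectionExponent}$ but at $I=(2\infectionRate\numberOfVertices)^{-1/\infectionExponent}$, a constant factor above it; there the infection rate is at most half the healing rate, so above $I$ the walk is dominated by a $1/3$-biased gambler's ruin and drops back to $I-1$ in constant expected time --- your version, with the boundary at $I^*$ itself, leaves a nearly unbiased stretch between $I^*$ and $(2\infectionRate\numberOfVertices)^{-1/\infectionExponent}$ on which excursions genuinely take polynomially many steps. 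Second, below $I$ the paper modifies the process by slowing the healing clocks so that infecting is always exactly $\infectionRate\numberOfVertices$ times as likely as healing; this can only increase the survival time, and it turns the walk below $I$ into one with up-probability $\infectionRate\numberOfVertices/(\infectionRate\numberOfVertices+1)\ge 2/3$, which is absorbed at $\{0,I\}$ in constant expected time while still having extinction probability at least $(\infectionRate\numberOfVertices)^{-I}$ per phase. With both parts of a phase costing $\bigO{1}$, the product $\E{T}\le\bigO{(\infectionRate\numberOfVertices)^{I}}$ follows with no stray polynomial. You would need to add these two ideas (or an equivalent control of the excursion length near the equilibrium) to complete your proof.
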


\begin{proof}
    We show that while there are at least $(2\infectionRate\numberOfVertices)^{-1/\infectionExponent}$ infected vertices, the probability of infecting a new vertex is at most half as big as the probability to heal one in the next step. Therefore, the number of infected vertices quickly drops down to that point again when it goes above it. We argue that in any state, the probability of infecting a new vertex is at most $\infectionRate \numberOfVertices$ as big as the probability to heal one in the next step. Therefore the process dominates a gambler's ruin instance with a biased coin of probability $\frac{\infectionRate \numberOfVertices}{\infectionRate \numberOfVertices +1}$, for which we upper-bound the time until it reaches its lower bound. Note that we need $\infectionRate\numberOfVertices \geq2$ because if this value gets too small, the interval we consider is too small to let a gambler's ruin run on it.

    Let $c \in \R_{\geq 1}$. Consider a state with $I=(c\infectionRate\numberOfVertices)^{-1/\infectionExponent}$ infected vertices. From this state, vertices heal at a rate of $I$ and new vertices get infected at a rate of

\begin{align*}
    \infectionRate I^{1+\infectionExponent}(\numberOfVertices-I) &\leq \infectionRate I^{1+\infectionExponent}\numberOfVertices\\
    &= \infectionRate I(c\infectionRate\numberOfVertices)^{-\infectionExponent/\infectionExponent}\numberOfVertices\\
    &=I/c.
\end{align*}

Note that for $c\geq 2$, the rate at which new vertices get infected is at most half as high as the rate at which vertices heal. Hence, while there are at least $I=(2\infectionRate\numberOfVertices)^{-1/\infectionExponent}$ infected vertices, the discrete version of the contact process is dominated by a gambler's ruin Instance $A$ with a biased coin with probability $1/3$. That gives is that the expected time it takes to drop from $I$ infected vertices to $I-1$ infected vertices is at most constant in expectation.

As long as the infection has at least 1 infected vertex, $c$ is upper bounded by $\infectionRate \numberOfVertices$, which also means that in any state, the probability of infecting a new vertex is at most $\infectionRate \numberOfVertices$ as big as the probability to heal one in the next step. Hence, while there are at most $I=(2\infectionRate\numberOfVertices)^{-1/\infectionExponent}$ infected vertices, the discrete version of the contact process is dominated by a gambler's ruin Instance $A$ with a biased coin with probability $\frac{\infectionRate\numberOfVertices}{\infectionRate\numberOfVertices+1}$.

Let $p$ be the probability that starting with $I-1$ infected vertices, the infection dies out before reaching $I$ infected vertices. We get using \Cref{pre:gamblersRuin}
\begin{align*}
    p&\geq \frac{1-(\infectionRate\numberOfVertices)^{1}}{1-(\infectionRate\numberOfVertices)^{I}}\\
    &= \frac{\infectionRate\numberOfVertices-1}{(\infectionRate\numberOfVertices)^{I}-1}\\
    &\geq \frac{1}{(\infectionRate\numberOfVertices)^{I}}\\
\end{align*}

Consider the following variation of the original process. While below $I$ infected vertices, we decrease the rate at which vertices heal such that it is always $\infectionRate\numberOfVertices$ times as likely to infect a new vertex in the next step than healing a vertex. This adjustment only increases the survival time $T$. We now split the adjusted process into phases that start and end when the number of infected vertices increases from $I-1$ to $I$. We showed that the expected time such a phase takes above $I$ infected vertices is constant and below $I$ infected vertices it also takes at most a constant amount of time as it is just a random walk with a biased coin of probability $\frac{\infectionRate\numberOfVertices}{\infectionRate\numberOfVertices+1}\geq \frac{2}{3}$. The number of those phases is dominated by a geometric random variable $X$ with parameter $p$ as each phase independently has a probability of $p$ to be the last one before the infection dies out. That gives us that $\E{T} \leq \E{X}\cdot \bigO{1}$. Plugging in the expected value of $X$ concludes the proof.
\end{proof}

Using those two results, we can pinpoint the threshold relatively precisely.

\begin{corollary}
    \label{cor:cliqueSubLinear}
Let $G$ be a clique with $\numberOfVertices \in \N_{>0}$ vertices. Further, let $\contactProcess$ be a contact process with infection coefficient $\infectionRate \in \R_{>0}$ and infection exponent $\infectionExponent \in (-1,0)$ on $G$ that starts with exactly one infected vertex. Let $T$ be the survival time of $\contactProcess$.

\begin{enumerate}
    \item If $\infectionRate \in \smallOmega{\log (\numberOfVertices)^{-\infectionExponent}/\numberOfVertices}$ then $\E{T}$ is super-polynomial in $\numberOfVertices$.
    \item If $\infectionRate \in \bigO{1/\numberOfVertices}$ then $\E{T}$ is constant in $\numberOfVertices$.\qedhere
\end{enumerate}
\end{corollary}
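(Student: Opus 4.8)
The plan is to derive both parts directly from the two survival-time bounds established above --- the lower bound \Cref{thm:cliqueSubLinExp} and the matching upper bound stated immediately before --- supplemented by the monotonicity of the expected survival time in the infection coefficient $\infectionRate$ recorded in \Cref{sec:preliminaries}. That monotonicity is what lets me reach the parameter ranges where the side conditions of those two theorems do not directly hold. No new probabilistic argument is needed: the work is asymptotic bookkeeping plus the verification of boundary hypotheses.

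For the first part, the key observation is that since $\infectionExponent \in (-1,0)$ the map $x \mapsto x^{-1/\infectionExponent}$ is increasing, so $\infectionRate \in \smallOmega{\log(\numberOfVertices)^{-\infectionExponent}/\numberOfVertices}$ gives $\infectionRate\numberOfVertices \in \smallOmega{\log(\numberOfVertices)^{-\infectionExponent}}$ and hence $(\infectionRate\numberOfVertices)^{-1/\infectionExponent} \in \smallOmega{\log(\numberOfVertices)}$, using $(-\infectionExponent)\cdot(-1/\infectionExponent)=1$. The lower bound $\bigOmega{2^{(\infectionRate\numberOfVertices)^{-1/\infectionExponent}}/\numberOfVertices}$ of \Cref{thm:cliqueSubLinExp} then reads $2^{\smallOmega{\log(\numberOfVertices)}}/\numberOfVertices$, which exceeds every fixed power of $\numberOfVertices$ and is thus super-polynomial. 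It remains to secure the two hypotheses of \Cref{thm:cliqueSubLinExp}. The lower condition $(\infectionRate\numberOfVertices/4)^{-1/\infectionExponent} \ge 2$ follows from the same computation, as $(\infectionRate\numberOfVertices/4)^{-1/\infectionExponent}\to\infty$. The upper condition $(\infectionRate\numberOfVertices)^{-1/\infectionExponent} \le \numberOfVertices/2$ can fail for large $\infectionRate$, so I would discharge it by monotonicity: put $\tilde{\infectionRate} \defeq \min\{\infectionRate, \infectionRate^*\}$, where $\infectionRate^*$ is chosen so that $(\infectionRate^*\numberOfVertices)^{-1/\infectionExponent} = \numberOfVertices/2$, i.e.\ $\infectionRate^* = \bigTheta{\numberOfVertices^{-\infectionExponent-1}}$. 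One checks $\infectionRate^* \in \smallOmega{\log(\numberOfVertices)^{-\infectionExponent}/\numberOfVertices}$, so $\tilde{\infectionRate}$ still lies in the assumed regime and now satisfies both hypotheses; since $\tilde{\infectionRate}\le\infectionRate$, monotonicity gives $\E{T} \ge \E{\tilde{T}}$, and applying \Cref{thm:cliqueSubLinExp} to $\tilde{\infectionRate}$ completes this part.

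For the second part, $\infectionRate \in \bigO{1/\numberOfVertices}$ means $\infectionRate\numberOfVertices = \bigO{1}$. I would invoke the upper-bound theorem, which yields $\E{T} \in \bigO{(\infectionRate\numberOfVertices)^{(\infectionRate\numberOfVertices)^{-1/\infectionExponent}}}$ whenever $\infectionRate\numberOfVertices \ge 2$; with $\infectionRate\numberOfVertices$ bounded by a constant, both the base $\infectionRate\numberOfVertices$ and the exponent $(\infectionRate\numberOfVertices)^{-1/\infectionExponent}$ are bounded by constants, so the whole expression is $\bigO{1}$. The only gap is the hypothesis $\infectionRate\numberOfVertices \ge 2$, which I again close by monotonicity, replacing $\infectionRate$ with $\max\{\infectionRate, 2/\numberOfVertices\}$; this can only raise the survival time and keeps $\infectionRate\numberOfVertices$ in a constant range, so the resulting $\bigO{1}$ bound transfers back to the original $\infectionRate$.

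The main obstacle is not the asymptotics themselves but confirming that the boundary hypotheses of the two theorems are met across the full stated ranges of $\infectionRate$, and getting the two monotonicity reductions exactly right: one must verify that the truncated coefficient $\tilde{\infectionRate}$ (respectively the inflated coefficient) still lies in the intended regime and still satisfies the remaining side condition. Extra care is needed with the signs of the exponents, since $\infectionExponent<0$ makes $-1/\infectionExponent$ positive and thereby reverses the naive monotonicity of the various powers of $\log(\numberOfVertices)$ and $\numberOfVertices$ that appear throughout.
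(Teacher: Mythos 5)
Your proposal is correct and follows exactly the route the paper intends: the paper states the corollary as an immediate consequence of \Cref{thm:cliqueSubLinExp} and the preceding upper-bound theorem, with the monotonicity of the survival time in $\infectionRate$ (noted in \Cref{sec:preliminaries}) implicitly covering the parameter ranges where the side conditions fail. Your explicit truncation/inflation of $\infectionRate$ to satisfy those hypotheses, and the exponent bookkeeping $(\log(\numberOfVertices)^{-\infectionExponent})^{-1/\infectionExponent}=\log(\numberOfVertices)$, are exactly the details the paper leaves to the reader.
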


\subsection{Super-linear scaling}

When the infection rate scales super-linearly, the equilibrium point becomes repellent. When being a constant factor away from that equilibrium, there is a constant drift away from the equilibrium. Therefore, the process is very unlikely to reach this equilibrium, but when it does, the infection might infect all vertices and stay above the equilibrium for a very long time. The expected survival time is therefore decided by the product of the probability of reaching the equilibrium and the expected time the process stays above the equilibrium.

We bound the probability of reaching the equilibrium first. Then we calculate the expected survival time after reaching the equilibrium. Putting those two results together gives us the lower and upper bound for the expected survival time. Note that for the upper bound on the survival time we do not need the second lemma as in the regime we consider the equilibrium value is above the number of vertices and can therefore never be reached.

The following lemma bounds the probability to reach the equilibrium from both sides.

\begin{lemma}\label{lem:star_positive_p}
Let $G$ be a clique with $\numberOfVertices \in \N_{>0}$ vertices. Further, let $\contactProcess$ be a contact process with infection coefficient $\infectionRate \in \R_{>0}$ and infection exponent $\infectionExponent \in \R_{>0}$ on $G$ that starts with exactly one infected vertex. Let $p$ be the probability that the infection reaches a state with $(\infectionRate \numberOfVertices/2)^{-1/\infectionExponent}$ infected vertices. If $1 \leq (\infectionRate \numberOfVertices/2)^{-1/\infectionExponent} \leq n/2$, then

\begin{align*}
    2^{1-(2\infectionRate \numberOfVertices)^{-1/\infectionExponent}} \geq p \geq (\infectionRate \numberOfVertices/2)^{(\infectionRate \numberOfVertices/2)^{-1/\infectionExponent}}.\qedhere
\end{align*}
\end{lemma}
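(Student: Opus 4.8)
The plan is to pass to the embedded discrete-time jump chain and read off both bounds from one-sided gambler's ruin comparisons. In a clique state with $i$ infected vertices the total infection rate is $\infectionRate i^{1+\infectionExponent}(\numberOfVertices-i)$ and the total healing rate is $i$, so in the jump chain the process moves up with probability $p_i = R(i)/(R(i)+1)$ and down with probability $1-p_i$, where $R(i) \defeq \infectionRate i^{\infectionExponent}(\numberOfVertices-i)$ is the ratio of infection to healing rate. Writing $m \defeq (\infectionRate\numberOfVertices/2)^{-1/\infectionExponent}$ and $a \defeq (2\infectionRate\numberOfVertices)^{-1/\infectionExponent}$, a direct computation gives $R(m)=2(1-m/\numberOfVertices)\in[1,2]$ and $R(a)=\tfrac12(1-a/\numberOfVertices)<\tfrac12$. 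I would also record two facts: first, $R$ increases up to its maximum at $\infectionExponent\numberOfVertices/(\infectionExponent+1)$, and since $a=2^{-2/\infectionExponent}m\le 2^{-2/\infectionExponent-1}\numberOfVertices$ one checks $2^{-2/\infectionExponent-1}\le\infectionExponent/(\infectionExponent+1)$ for all $\infectionExponent>0$, so $R$ is increasing on all of $[1,a]$; second, the crude bound $R(i)\ge \infectionRate i^{\infectionExponent}\numberOfVertices/2 \ge \infectionRate\numberOfVertices/2$ holds for every $1\le i\le m$, because $\numberOfVertices-i\ge\numberOfVertices/2$ and $i^{\infectionExponent}\ge1$ there.

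For the upper bound I would argue that, since the chain changes by $\pm1$ and $a<m$, reaching $m$ forces the process to pass through $\lceil a\rceil$, whence $p\le\Pr{\text{reach }\lceil a\rceil\text{ before }0}$. On $\{1,\dots,\lceil a\rceil\}$ monotonicity gives $R(i)\le R(a)<\tfrac12$, so $p_i\le 1/3$ and the up/down ratio is at most $1/2$. The process is therefore dominated by a gambler's ruin instance with win probability $1/3$, which is \emph{more} likely to reach the top; applying \Cref{pre:gamblersRuin} to that instance (start $1$, bounds $0$ and $\lceil a\rceil$) yields hitting probability $1/(2^{\lceil a\rceil}-1)\le 2^{1-a}$ whenever $a\ge1$, and when $a<1$ the claimed bound $2^{1-a}\ge1$ is vacuous. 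This gives $p\le 2^{1-(2\infectionRate\numberOfVertices)^{-1/\infectionExponent}}$.

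For the lower bound I would compare in the opposite direction, using $R(i)\ge \infectionRate\numberOfVertices/2$ to get $p_i\ge \tfrac{\infectionRate\numberOfVertices/2}{\infectionRate\numberOfVertices/2+1}$ for every $1\le i\le m-1$. Hence the process \emph{dominates} a gambler's ruin instance with this fixed win probability, i.e.\ with down/up ratio exactly $1/b$ where $b\defeq\infectionRate\numberOfVertices/2\le1$. \Cref{pre:gamblersRuin} (start $1$, bounds $0$ and $m$) then gives $p\ge \frac{(1-b)\,b^{\,m-1}}{1-b^{\,m}}$, and the elementary inequality $(1-b)b^{m-1}\ge b^m(1-b^m)$, equivalent to $1-2b+b^{m+1}\ge0$, holds whenever $b\le 1/2$. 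In that regime $p\ge b^m=(\infectionRate\numberOfVertices/2)^{(\infectionRate\numberOfVertices/2)^{-1/\infectionExponent}}$, as required.

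The hard part is the complementary regime $b\in(1/2,1]$ for the lower bound, which is exactly where the single fixed-ratio comparison becomes too lossy: there the drift ratio $R(i)$ exceeds $1$ throughout $[1,m]$, so the true reaching probability is a positive constant while the gambler's-ruin estimate decays like $b^{m-1}$, and the two can straddle $b^m$. Here I would instead exploit the upward drift directly—since $R(i)\ge R(1)=\infectionRate(\numberOfVertices-1)>1-\smallO{1}$ the process reaches $m$ before extinction with at least constant probability—and reconcile this constant with $b^m\le 1$ using $m=b^{-1/\infectionExponent}$, the only genuinely delicate point being very small $m$. Controlling this boundary, where the drift ratio crosses $1$, is precisely what forces the gap between the two thresholds $a$ (where $R=\tfrac12$) and $m$ (where $R\in[1,2]$) in the statement.
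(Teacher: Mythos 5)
Your argument is the same as the paper's: pass to the discrete jump chain and sandwich the hitting probability between two fixed-bias gambler's ruin comparisons, with up/down ratio at most $1/2$ below $a=(2\infectionRate\numberOfVertices)^{-1/\infectionExponent}$ for the upper bound and ratio at least $b=\infectionRate\numberOfVertices/2$ throughout $[1,m]$ for the lower bound, where $m=(\infectionRate\numberOfVertices/2)^{-1/\infectionExponent}$. The paper reaches the same two rate bounds by parameterizing states as $I=(c\infectionRate\numberOfVertices)^{-1/\infectionExponent}$ rather than via monotonicity of $R$, but the content is identical, and your upper bound and your lower bound in the case $b\le 1/2$ are correct and match the paper step for step.

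The genuine gap is the one you flag yourself: the lower bound for $b\in(1/2,1]$. What you offer there is a heuristic, not a proof, and it cannot be completed as sketched. The paper applies the final inequality $\frac{b^{-1}-1}{b^{-m}-1}\ge b^m$ with no case distinction; as you correctly note, this is equivalent to $1-2b+b^{m+1}\ge 0$, and that inequality genuinely fails for admissible parameters, e.g.\ $\infectionExponent=0.1$ and $\infectionRate\numberOfVertices=1.8$ give $b=0.9$, $m=0.9^{-10}\approx 2.87$, and $1-2b+b^{m+1}\approx -0.13$. Your fallback ("constant probability from the upward drift, reconciled with $b^m\le 1$") is not salvageable either: in that same example the jump chain must climb from $1$ to $3$ with up/down ratios $R(1)\approx 1.8$ and $R(2)\approx 1.93$, so the true hitting probability is $\bigl(1+1/1.8+1/(1.8\cdot 1.93)\bigr)^{-1}\approx 0.54$, while $b^m\approx 0.74$; the stated lower bound itself is violated in this corner, so no argument can close it. You have in fact located a step that the paper's own proof silently elides (for the application in \Cref{cor:cliqueSuperLinear} only a much weaker bound such as $p\ge 2^{-\smallO{\numberOfVertices}}$ is needed, which does hold there since $m\le 2^{1/\infectionExponent}$ when $b\ge 1/2$), but as a proof of the lemma as stated your proposal remains incomplete on $b\in(1/2,1]$.
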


\begin{proof}
    We bound the process below $(\infectionRate \numberOfVertices/2)^{-1/\infectionExponent}$ infected vertices with one gambler's ruin instance each for the upper and lower bound by upper and lower bounding the probability to infect a new vertex in the next step.

    Let $c \in \R_{\geq 1/2}$. Consider a state with $I=(c\infectionRate\numberOfVertices)^{-1/\infectionExponent}$ infected vertices. From this state, vertices heal at a rate of $I$ and because $(\infectionRate \numberOfVertices/2)^{-1/\infectionExponent} \leq n/2$, new vertices get infected at a rate of

\begin{align}
    \infectionRate I^{1+\infectionExponent}(\numberOfVertices-I) &\leq \infectionRate I^{1+\infectionExponent}\numberOfVertices\nonumber\\
    &= \infectionRate I(c\infectionRate\numberOfVertices)^{-\infectionExponent/\infectionExponent}\numberOfVertices\nonumber\\
    &=I/c\label{eq:1}
\end{align}

and

\begin{align}
    \infectionRate I^{1+\infectionExponent}(\numberOfVertices-I) &\geq \infectionRate I^{1+\infectionExponent}\numberOfVertices/2\nonumber\\
    &= \infectionRate I(c\infectionRate\numberOfVertices)^{-\infectionExponent/\infectionExponent}\numberOfVertices/2\nonumber\\
    &=\frac{1}{2c}I.\label{eq:2}
\end{align}

Note that these bounds are exactly the same bounds we got for negative $\infectionExponent$. However, now that $\infectionExponent$ is positive, a higher $c$ actually decreases $I$ instead of increasing it. That means that for $I$ below $(\infectionRate\numberOfVertices)^{-1/\infectionExponent}$, it is more likely to heal vertices than to infect new ones.

Let $p_1$ be the probability that the infection reaches a state with $(2\infectionRate \numberOfVertices)^{-1/\infectionExponent}$ infected vertices. As $(2\infectionRate \numberOfVertices)^{-1/\infectionExponent} \leq (\infectionRate \numberOfVertices/2)^{-1/\infectionExponent}$ it holds $p_1 \geq p$. Choosing $c=2$ in \Cref{eq:1} implies that below $(2\infectionRate \numberOfVertices)^{-1/\infectionExponent}$ infected vertices, the probability to infect a new vertex in the next step is at most half as high as the probability to heal one. Therefore the number of infected vertices in the discrete version of $\contactProcess$ dominates a gambler's ruin instance $A_1$ with a biased coin of probability $1/3$ which gives us using \Cref{pre:gamblersRuin} that

\begin{align*}
    p_1&\leq \frac{1-2^{1}}{1-2^{(2\infectionRate \numberOfVertices)^{-1/\infectionExponent}}}\\
    &= \frac{1}{2^{(2\infectionRate \numberOfVertices)^{-1/\infectionExponent}}-1}\\
    &\leq 2^{1-(2\infectionRate \numberOfVertices)^{-1/\infectionExponent}}.
\end{align*}

For the lower bound on $p$ note that the probability to infect a vertex in the next step is minimized when the number of infected vertices is 1. That corresponds to $c= (\infectionRate \numberOfVertices)^{-1}$ in \Cref{eq:2}, which gives us that as long as the infection did not die out, the probability to infect a vertex in the next step is at least $\frac{\infectionRate\numberOfVertices/2}{\infectionRate\numberOfVertices/2+1}$. Therefore the number of infected vertices of the discrete version of $\contactProcess$ is dominated by a gambler's ruin instance $A_2$ with a biased coin of probability $\frac{\infectionRate\numberOfVertices/2}{\infectionRate\numberOfVertices/2+1}$ which gives us

\begin{align*}
    p&\geq \frac{1-(\infectionRate\numberOfVertices/2)^{-1}}{1-(\infectionRate\numberOfVertices/2)^{-(\infectionRate \numberOfVertices/2)^{-1/\infectionExponent}}}\\
    &= \frac{(\infectionRate\numberOfVertices/2)^{-1}-1}{(\infectionRate\numberOfVertices/2)^{-(\infectionRate \numberOfVertices/2)^{-1/\infectionExponent}}-1}\\
    &\geq (\infectionRate\numberOfVertices/2)^{(\infectionRate \numberOfVertices/2)^{-1/\infectionExponent}}.\qedhere
\end{align*}
\end{proof}

The following lemma lower bounds the time we stay above the equilibrium. Its proof is very similar to \Cref{thm:cliqueSubLinExp} and uses that while we are a constant factor above the equilibrium, the probability to infect a new vertex is at least twice as high as the probability to heal one in the next step. That gives us an exponential survival time.

\begin{restatable}{lemma}{cliquePositiveE}\label{lem:star_positive_E}
Let $G$ be a clique with $\numberOfVertices \in \N_{>0}$ vertices. Further, let $\contactProcess$ be a contact process with infection coefficient $\infectionRate \in \R_{>0}$ and infection exponent $\infectionExponent \in \R_{>0}$ on $G$ that starts with $(\infectionRate \numberOfVertices/2)^{-1/\infectionExponent}$ infected vertices. Let $T$ be the survival time of $C$. If $(\infectionRate \numberOfVertices/2)^{-1/\infectionExponent} \in \smallO{\numberOfVertices}$, then $\E{T} \geq 2^{\bigTheta{\numberOfVertices}}$.
\end{restatable}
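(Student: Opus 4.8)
The plan is to mirror the argument of \Cref{thm:cliqueSubLinExp}, exploiting that above the (now repelling) equilibrium the drift points \emph{upward}. Concretely, I would show that on the whole band of states with $I \in [(\infectionRate\numberOfVertices/2)^{-1/\infectionExponent}, \numberOfVertices/2]$ infected vertices the process is pushed toward larger $I$ with a bias bounded away from $1/2$ by a constant. Since this band has width $\bigTheta{\numberOfVertices}$, once the process reaches its upper end $\numberOfVertices/2$ it must cross a barrier of linear width to come back down, which happens only with probability $2^{-\bigTheta{\numberOfVertices}}$; the resulting geometric number of excursions yields the claimed $2^{\bigTheta{\numberOfVertices}}$ survival time.

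First I would establish the drift. Writing $I_0 \defeq (\infectionRate\numberOfVertices/2)^{-1/\infectionExponent}$, the defining identity gives $\infectionRate = 2/(\numberOfVertices I_0^{\infectionExponent})$, so in a state with $I$ infected vertices the ratio of the infection rate $\infectionRate I^{1+\infectionExponent}(\numberOfVertices - I)$ to the healing rate $I$ equals
\[
    \infectionRate I^{\infectionExponent}(\numberOfVertices - I) = 2\left(\frac{I}{I_0}\right)^{\infectionExponent}\left(1 - \frac{I}{\numberOfVertices}\right).
\]
I would then check that on $[I_0, \numberOfVertices/2]$ this expression is minimized at $I = I_0$, where it equals $2(1 - I_0/\numberOfVertices)$. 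Here the hypothesis $I_0 \in \smallO{\numberOfVertices}$ is essential: it makes $1 - I_0/\numberOfVertices = 1 - \smallO{1}$, so for $\numberOfVertices$ large the ratio is at least, say, $3/2$ throughout the band, i.e. the next clock to fire is an infection with probability at least $3/5$. Consequently the discrete process restricted to this band dominates a gambler's ruin with a biased coin of some fixed probability $p > 1/2$ (and $q \defeq 1-p$) drifting toward $\numberOfVertices/2$.

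Then I would run the gambler's-ruin/Wald bookkeeping of \Cref{thm:cliqueSubLinExp}. Starting at $I_0$, the upward bias lets the process reach the upper barrier $\numberOfVertices/2$ before falling back below $I_0$ with at least constant probability (\Cref{pre:gamblersRuin}). From $\numberOfVertices/2$, the probability that a single downward excursion reaches $I_0$ before returning to $\numberOfVertices/2$ is, by \Cref{pre:gamblersRuin}, at most $(p/q)^{-(\numberOfVertices/2 - I_0)} = 2^{-\bigTheta{\numberOfVertices}}$, so the number of returns to $\numberOfVertices/2$ before the process first drops back to $I_0$ dominates a geometric random variable of expectation $2^{\bigTheta{\numberOfVertices}}$. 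Throughout all these excursions the number of infected vertices stays at least $I_0 \geq 1$, so the process is alive, and each excursion takes expected time at least $1/\poly{\numberOfVertices}$ because all clocks together fire at a polynomial rate. \Cref{pre:waldsEquation} then gives $\E{T} \geq 2^{\bigTheta{\numberOfVertices}}/\poly{\numberOfVertices} = 2^{\bigTheta{\numberOfVertices}}$.

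The main obstacle is the drift estimate rather than the bookkeeping: an unbiased walk on a band of width $\bigTheta{\numberOfVertices}$ would only give polynomial survival, so it is crucial that the bias be bounded away from $1/2$ by a constant across the \emph{entire} linear-width band, not merely near the equilibrium. This is exactly what the condition $I_0 \in \smallO{\numberOfVertices}$ buys: it keeps the saturation factor $1 - I/\numberOfVertices$ close to $1$ and the ratio close to $2$ at the bottom of the band (where it is smallest), while toward $\numberOfVertices/2$ the factor $(I/I_0)^{\infectionExponent}$ only strengthens the bias. A minor technical point is to phrase the excursions so that they are genuinely independent and identically distributed and their number is a stopping time, as required by \Cref{pre:waldsEquation}.
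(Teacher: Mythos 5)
Your proposal is correct and follows essentially the same route as the paper's proof: a rate-ratio computation giving a constant upward bias on the band between the equilibrium and $\numberOfVertices/2$, domination by a biased gambler's ruin of width $\bigTheta{\numberOfVertices}$, and a geometric count of excursions from $\numberOfVertices/2$ combined with Wald's equation. The only (harmless) deviation is that you use $\numberOfVertices - I_0 = (1-\smallO{1})\numberOfVertices$ to get a constant bias already at the starting level $I_0=(\infectionRate\numberOfVertices/2)^{-1/\infectionExponent}$ and hence a constant probability of first reaching $\numberOfVertices/2$, whereas the paper bounds $\numberOfVertices - I \geq \numberOfVertices/2$, gets only an unbiased walk at that level (reach probability $2/\numberOfVertices$), and reserves the $2/3$-biased walk for the slightly higher level $(\infectionRate\numberOfVertices/4)^{-1/\infectionExponent}$; both versions are swallowed by the final $2^{\bigTheta{\numberOfVertices}}$.
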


\begin{proof}
    We show that while there are at least $(\infectionRate \numberOfVertices/4)^{-1/\infectionExponent}$ infected vertices, the probability to infect a vertex in the next step is at least twice as high as the probability to heal one. That lets us lower bound the expected survival time by the time that a gambler's ruin instance with success probability $2/3$ takes to reach its lower end. As $(\infectionRate \numberOfVertices/2)^{-1/\infectionExponent} \in \smallO{\numberOfVertices}$, the size of that instance is in $\bigTheta{\numberOfVertices}$, which gives us the desired bound.

    Let $c \in \R_{\geq 2}$ such that $(\infectionRate\numberOfVertices/c)^{-1/\infectionExponent} \leq \numberOfVertices/2$. Consider a state with $I=(\infectionRate\numberOfVertices/c)^{-1/\infectionExponent}$ infected vertices. From this state, vertices heal at a rate of $I$ and new vertices get infected at a rate of

\begin{align}
    \infectionRate I^{1+\infectionExponent}(\numberOfVertices-I) &\geq \infectionRate I^{1+\infectionExponent}\numberOfVertices/2\nonumber\\
    &= \infectionRate I(\infectionRate\numberOfVertices/c)^{-\infectionExponent/\infectionExponent}\numberOfVertices/2\nonumber\\
    &=\frac{c}{2}I.\label{eq:3}
\end{align}

Note that while there at least $(\infectionRate\numberOfVertices/2)^{-1/\infectionExponent}$ infected vertices, new vertices get infected faster than vertices heal. That means that between $(\infectionRate\numberOfVertices/2)^{-1/\infectionExponent}$ and $\numberOfVertices/2$ infected vertices, the process dominates an unbiased gambler's ruin, which gives us a probability of at least $2/\numberOfVertices$ to reach $\numberOfVertices/2$ infected vertices before the infection dies out (see \cite{feller1957introduction}).

By choosing $c=4$ in \Cref{eq:3} we get that while there are at least $(\infectionRate\numberOfVertices/4)^{-1/\infectionExponent}$ infected vertices, new vertices get infected at least twice as fast as vertices heal. Hence, between $(\infectionRate\numberOfVertices/4)^{-1/\infectionExponent}$ and $\numberOfVertices/2$ infected vertices the number of infected vertices of the discrete version of $\contactProcess$ dominates a biased gambler's ruin instance with success probability $2/3$. Let $p$ be the probability that starting at $\numberOfVertices/2 -1$ infected vertices, the number of infected vertices drops to $(\infectionRate\numberOfVertices/4)^{-1/\infectionExponent}$ before reaching $\numberOfVertices/2$. We get

\begin{align*}
    p&\leq \frac{1-2^{1}}{1-2^{\numberOfVertices/2 - (\infectionRate\numberOfVertices/4)^{-1/\infectionExponent}}}\\
    &= \frac{1}{2^{\numberOfVertices/2 - (\infectionRate\numberOfVertices/4)^{-1/\infectionExponent}}-1}.
\end{align*}

As $(\infectionRate\numberOfVertices/2)^{-1/\infectionExponent}\in \smallO{\numberOfVertices}$ it holds $\numberOfVertices/2 - (\infectionRate\numberOfVertices/4)^{-1/\infectionExponent} \in \bigTheta{\numberOfVertices}$. After reaching $\numberOfVertices/2$ infected vertices, the process returns to a state with this many infected vertices again a number of times that dominates a geometric random variable with parameter $p$. The return times are in expectation at least $1/\numberOfVertices$ time apart as vertices heal at a rate of at most $\numberOfVertices$. Together with the fact that starting with $(\infectionRate\numberOfVertices/2)^{-1/\infectionExponent}$ infected vertices we have a probability to reach $\numberOfVertices/2$ infected vertices of at least $2/\numberOfVertices$, we get

\begin{align*}
    \E{T} &\geq \frac{2}{\numberOfVertices} \cdot \frac{1}{p} \cdot \frac{1}{\numberOfVertices}\\
    &\geq 2^{\bigTheta{\numberOfVertices}}.\qedhere
\end{align*}
\end{proof}

Putting those two results together gives us the following threshold.

\begin{corollary}
    \label{cor:cliqueSuperLinear}
Let $G$ be a clique with $\numberOfVertices \in \N_{>0}$ vertices. Further, let $\contactProcess$ be a contact process with infection coefficient $\infectionRate \in \R_{>0}$ and infection exponent $\infectionExponent \in \R_{>0}$ on $G$ that starts with exactly one infected vertex. Let $T$ be the survival time of $\contactProcess$.

\begin{enumerate}
    \item If $\infectionRate \in \smallOmega{\numberOfVertices^{-1-\infectionExponent}\log (\numberOfVertices)^{\infectionExponent}}$ then $\E{T}$ is exponential in $\numberOfVertices$.
    \item If $\infectionRate \in \smallO{\numberOfVertices^{-1-\infectionExponent}}$ then $\E{T}$ is constant in $\numberOfVertices$.\qedhere
\end{enumerate}
\end{corollary}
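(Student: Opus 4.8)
The plan is to prove the two parts separately: the first by gluing together \Cref{lem:star_positive_p} and \Cref{lem:star_positive_E}, and the second by isolating the strongly subcritical regime where the equilibrium lies above \numberOfVertices and is never reached. For Part~1, write $E \defeq (\infectionRate \numberOfVertices/2)^{-1/\infectionExponent}$ for the equilibrium value, so that $\infectionRate \numberOfVertices / 2 = E^{-\infectionExponent}$. First I would verify the hypotheses of both lemmas. Since $\infectionRate \in \smallOmega{\numberOfVertices^{-1-\infectionExponent}\log(\numberOfVertices)^{\infectionExponent}}$, we have $\infectionRate \numberOfVertices/2 \in \smallOmega{\numberOfVertices^{-\infectionExponent}\log(\numberOfVertices)^{\infectionExponent}}$, hence $E \in \smallO{\numberOfVertices/\log \numberOfVertices} \subseteq \smallO{\numberOfVertices}$; in particular $E \le \numberOfVertices/2$ for large \numberOfVertices, which is the upper condition both lemmas require. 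The remaining condition $E \ge 1$ (equivalently $\infectionRate \le 2/\numberOfVertices$) holds for the slowest-growing coefficients in the regime, but may fail for larger \infectionRate. To handle this uniformly I would pass to $\tilde\infectionRate \defeq \min(\infectionRate, 2/\numberOfVertices) \le \infectionRate$ and invoke the monotonicity of the survival time in the infection coefficient stated in the preliminaries; note that $\tilde\infectionRate$ still lies in $\smallOmega{\numberOfVertices^{-1-\infectionExponent}\log(\numberOfVertices)^{\infectionExponent}}$ (both \infectionRate and $2/\numberOfVertices$ do), and $\tilde\infectionRate \le 2/\numberOfVertices$ forces $\tilde E \ge 1$, so the lemmas apply to $\tilde\infectionRate$ and any bound obtained for it transfers to \infectionRate.

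Next I would combine the two lemmas using the strong Markov property. Because the discrete process changes the number of infected vertices by exactly one per step, on the event of reaching $E$ infected vertices it passes through the state with exactly $E$ infected; from that stopping time onward it is distributed as a process started with $E$ infected vertices. Writing $p$ for the probability of ever reaching $E$ infected vertices, this yields
\begin{align*}
    \E{T} \ge p \cdot 2^{\bigTheta{\numberOfVertices}},
\end{align*}
where $2^{\bigTheta{\numberOfVertices}}$ is the lower bound of \Cref{lem:star_positive_E} and $p \ge (\infectionRate\numberOfVertices/2)^{E} = E^{-\infectionExponent E}$ is the lower bound of \Cref{lem:star_positive_p}. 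The crux is then the estimate $E^{-\infectionExponent E} = 2^{-\infectionExponent E \log_2 E}$: since $E \in \smallO{\numberOfVertices/\log \numberOfVertices}$ we have $\log_2 E \in \bigO{\log \numberOfVertices}$, so the exponent $\infectionExponent E \log_2 E$ is $\smallO{\numberOfVertices}$ (as \infectionExponent is constant). Hence $\E{T} \ge 2^{\bigTheta{\numberOfVertices} - \smallO{\numberOfVertices}} = 2^{\bigOmega{\numberOfVertices}}$, which is exponential in \numberOfVertices. I expect this last step to be the main obstacle: one must confirm that the super-polynomially small cost $p$ of reaching the repelling equilibrium does not swallow the exponential survival gain, and it is precisely the hypothesis $\infectionRate \in \smallOmega{\numberOfVertices^{-1-\infectionExponent}\log(\numberOfVertices)^{\infectionExponent}}$ (forcing $E \log E = \smallO{\numberOfVertices}$) that rules this out.

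For Part~2, $\infectionRate \in \smallO{\numberOfVertices^{-1-\infectionExponent}}$ pushes the equilibrium above \numberOfVertices, so the process always sits below it with a strong downward drift. I would quantify this by observing that in any state with $I \le \numberOfVertices$ infected vertices the infection rate satisfies $\infectionRate I^{1+\infectionExponent}(\numberOfVertices - I) \le \infectionRate \numberOfVertices^{1+\infectionExponent} I =: q I$ with $q = \infectionRate \numberOfVertices^{1+\infectionExponent} \in \smallO{1}$, while the healing rate is exactly $I$. Thus in the discrete process the probability of an infection in the next step is at most $q/(1+q) \le q$ in every state, so the number of infected vertices is stochastically dominated, via a monotone coupling, by a downward-biased walk on \N with up-probability $q$ absorbing at $0$; started from one infected vertex this walk reaches $0$ in $1/(1-2q) = \bigO{1}$ expected steps. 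Finally, since while infected the total event rate is at least $I \ge 1$, each step takes expected time at most $1$, so summing over the $\bigO{1}$ expected steps (using that the event $\{S \ge j\}$ is measurable with respect to the first $j-1$ steps while each conditional step time is at most $1$) gives $\E{T} = \bigO{1}$. This part is routine once the subcritical bound $q = \smallO{1}$ is in hand.
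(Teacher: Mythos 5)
Your proposal is correct and follows essentially the same route as the paper: part~1 combines \Cref{lem:star_positive_p} and \Cref{lem:star_positive_E} via $\E{T}\geq \Pr{E_0}\cdot\E{T}[E_0]$ after capping $\infectionRate$ at roughly $2/\numberOfVertices$ and invoking monotonicity, with the same $2^{-\smallO{\numberOfVertices}}\cdot 2^{\bigTheta{\numberOfVertices}}$ bookkeeping; part~2 is the paper's domination by a downward-biased walk with $\smallO{1}$ up-probability, constant expected absorption time, and total event rate at least $1$. Your explicit $\tilde\infectionRate=\min(\infectionRate,2/\numberOfVertices)$ is just a cleaner phrasing of the paper's monotonicity reduction.
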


\begin{proof}
    First consider $\infectionRate \in \smallOmega{\numberOfVertices^{-1-\infectionExponent}\log( \numberOfVertices)^{\infectionExponent}}$. Note that this implies that $\infectionRate \numberOfVertices^{-1/\infectionExponent} \in \smallO{\numberOfVertices/ \log( \numberOfVertices)}$. Now consider all $\infectionRate$ such that $\infectionRate \numberOfVertices \geq 1$. For all larger $\infectionRate$ we get the same results using the fact that the expected survival time scales monotonically with $\infectionRate$. Now the conditions for \Cref{lem:star_positive_p} and \Cref{lem:star_positive_E} are fulfilled. Let $E_0$ be the event that the infection reaches a state with $(\infectionRate \numberOfVertices/2)^{-1/\infectionExponent}$ infected vertices. By \Cref{lem:star_positive_p} it holds that

    \begin{align*}
        \Pr{E_0} &\geq (\infectionRate \numberOfVertices/2)^{(\infectionRate \numberOfVertices/2)^{-1/\infectionExponent}}\\
        &\geq (\infectionRate \numberOfVertices/2)^{\smallO{\numberOfVertices/\log(\numberOfVertices)}}\\
        &\geq 2^{-\smallO{\numberOfVertices}}.
    \end{align*}

    By \Cref{lem:star_positive_E} it holds that $\E{T}[E_0] \geq 2^{\bigTheta{\numberOfVertices}}$. Together we get

    \begin{align*}
        \E{T} &\geq \Pr{E_0} \cdot \E{T}[E_0]\\
        &\geq 2^{-\smallO{\numberOfVertices}} \cdot 2^{\bigTheta{\numberOfVertices}}\\
        &\geq 2^{\bigTheta{\numberOfVertices}}.
    \end{align*}

    Now consider $\infectionRate \in \smallO{\numberOfVertices^{-1-\infectionExponent}}$. Let $c \in \R_{>0}$. Consider a state with $I=(c\infectionRate\numberOfVertices)^{-1/\infectionExponent}$ infected vertices. From this state, vertices heal at a rate of $I$ and new vertices get infected at a rate of

\begin{align*}
    \infectionRate I^{1+\infectionExponent}(\numberOfVertices-I) &\leq \infectionRate I^{1+\infectionExponent}\numberOfVertices\\
    &= \infectionRate I(c\infectionRate\numberOfVertices)^{-\infectionExponent/\infectionExponent}\numberOfVertices\\
    &=I/c.
\end{align*}

As $\infectionRate \in \smallO{\numberOfVertices^{-1-\infectionExponent}}$, it holds that $(\infectionRate\numberOfVertices)^{-1/\infectionExponent} \in \smallOmega{\numberOfVertices}$. Therefore for each reachable state it holds that $c \geq 2$, which means that the probability to heal a vertex is always at least twice as high as the probability to infect one. Therefore the process is dominated by a biased gambler's ruin instance with probability of $1/3$ to increase by 1. The gambler's ruin instance has a constant expected time to reach its lower bound and because triggers always happen at a rate of at least $1$, that also gives us a constant upper bound for $\E{T}$.
\end{proof}

\section{Star}
\label{sec:star}
For the star with $\numberOfVertices \in \N$ leaves we start by simplifying the process by only considering the number of infected leafs $\infectedDiscrete{\timeDiscrete}$ at step $\timeDiscrete$ and whether the center is infected or not. We then get the following transition rates

\begin{align*}
    \text{when }&\text{the center is infected:}&&\\
    &\infectedDiscrete{\timeDiscrete+1}= \infectedDiscrete{\timeDiscrete} +1 &\text{at rate } \infectionRate(\numberOfVertices - \infectedDiscrete{\timeDiscrete}),\\
    &\infectedDiscrete{\timeDiscrete+1}= \infectedDiscrete{\timeDiscrete} -1 &\text{at rate } \infectedDiscrete{\timeDiscrete},\\
    &\text{the center heals} &\text{at rate } 1.\\
    \text{when }&\text{the center is healthy:}&&\\
    &\infectedDiscrete{\timeDiscrete+1}= \infectedDiscrete{\timeDiscrete} +1 &\text{at rate } 0,\\
    &\infectedDiscrete{\timeDiscrete+1}= \infectedDiscrete{\timeDiscrete} -1 &\text{at rate } \infectedDiscrete{\timeDiscrete},\\
    &\text{the center gets infected} &\text{at rate } \infectionRate \infectedDiscrete{\timeDiscrete}^{1+\infectionExponent}.\\
\end{align*}

This time our analysis works for both positive and negative $\infectionExponent$. We first define $\infectionThreshold =\infectionRate^2\numberOfVertices(\infectionRate\numberOfVertices)^{\infectionExponent} $ because this value dictates the survival time of the process. We now start with the upper bound which is derived similarly to the normal setting.

\begin{lemma}\label{lem:starUpper}
Let $G$ be a star with $\numberOfVertices \in \N_{>0}$ leaves. Further, let $\contactProcess$ be a contact process with infection coefficient $\infectionRate \in (0,1)$ and infection exponent $\infectionExponent \in \R_{>-1}$ on $G$ that starts with infected center and susceptible leaves. Let $T$ be the survival time of $C$. If $\infectionRate \in \smallO{\numberOfVertices^{-\infectionExponent/(1+\infectionExponent)}}$ and $\infectionRate \in \smallOmega{\numberOfVertices^{-1}}$, then $\E{T} \leq 2^{\bigTheta{\infectionThreshold}}\log (\numberOfVertices)$.
\end{lemma}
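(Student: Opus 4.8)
The plan is to work with the reduced Markov chain on the pair $(I,\text{center state})$, where $I$ is the number of infected leaves, and to partition time into \emph{cycles} delimited by the moments at which the center becomes infected. With the transition rates listed above, a cycle looks as follows: the center is infected for an $\mathrm{Exp}(1)$-distributed time (it heals at rate $1$ regardless of $I$), and while it is infected $I$ grows at rate $\infectionRate(\numberOfVertices-I)$ and shrinks at rate $I$, so it relaxes toward the equilibrium $\infectionRate\numberOfVertices/(1+\infectionRate)\approx\infectionRate\numberOfVertices$. Once the center heals, leaves can only heal (they are infected at rate $0$), so $I$ strictly decreases while the center is re-infected at rate $\infectionRate I^{1+\infectionExponent}$; the infection therefore either dies (all leaves heal before re-infection) or re-infects the center and a new cycle begins. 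The survival time is then (number of cycles) $\times$ (time per cycle), which I would control through Wald's equation (\Cref{pre:waldsEquation}).

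The core estimate is the death probability per cycle. Starting a center-healthy phase with $I_0$ infected leaves, the probability $q$ that all leaves heal before the center is re-infected factorizes by the Markov property into $q=\prod_{i=1}^{I_0}\frac{i}{i+\infectionRate i^{1+\infectionExponent}}=\prod_{i=1}^{I_0}(1+\infectionRate i^{\infectionExponent})^{-1}$, since at level $i$ a heal beats a re-infection with probability $(1+\infectionRate i^{\infectionExponent})^{-1}$. Using $\ln(1+x)\le x$ and $\sum_{i=1}^{m}i^{\infectionExponent}=\bigTheta{m^{1+\infectionExponent}}$ (valid for $\infectionExponent>-1$) I get $\ln(1/q)\le\infectionRate\sum_{i=1}^{I_0}i^{\infectionExponent}=\bigO{\infectionRate I_0^{1+\infectionExponent}}$, which for $I_0=\bigO{\infectionRate\numberOfVertices}$ equals $\bigO{\infectionRate(\infectionRate\numberOfVertices)^{1+\infectionExponent}}=\bigO{\infectionThreshold}$. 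Hence the death probability per cycle is at least $2^{-\bigO{\infectionThreshold}}$, so the number of cycles is dominated by a geometric variable of expectation $2^{\bigO{\infectionThreshold}}$. As a sanity check, $\infectionExponent=0$ gives $q=(1+\infectionRate)^{-I_0}$ and $\infectionThreshold=\infectionRate^2\numberOfVertices$, matching the known linear star threshold $\infectionRate\sim\numberOfVertices^{-1/2}$.

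The crux is justifying $I_0=\bigO{\infectionRate\numberOfVertices}$ at every center-heal, i.e.\ ruling out a build-up of infected leaves. Because the center-infected chain has negative drift above $\approx\infectionRate\numberOfVertices$ (at $I=c\infectionRate\numberOfVertices$ the up/down rate ratio is $(1-c\infectionRate)/c\approx 1/c$), a gambler's-ruin estimate (\Cref{pre:gamblersRuin}) shows that climbing from the equilibrium region to $C\infectionRate\numberOfVertices$ within a phase has probability only $2^{-\bigTheta{\infectionRate\numberOfVertices}}$. Here the hypothesis $\infectionRate\in\smallO{\numberOfVertices^{-\infectionExponent/(1+\infectionExponent)}}$ enters decisively: it is equivalent to $\infectionThreshold\in\smallO{\infectionRate\numberOfVertices}$ (indeed $\infectionThreshold/(\infectionRate\numberOfVertices)=\infectionRate^{1+\infectionExponent}\numberOfVertices^{\infectionExponent}=\smallO{1}$), so this exceedance probability is far smaller than the per-cycle death probability $2^{-\bigTheta{\infectionThreshold}}$, and a union bound over the $2^{\bigO{\infectionThreshold}}$ cycles keeps $I=\bigO{\infectionRate\numberOfVertices}$ throughout. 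Converting this high-probability containment into a genuine bound on $\E{T}$ is the step I expect to be hardest, since a rare excursion to large $I$ carries a large conditional survival time; I would resolve it by exploiting that the suppression $2^{-\bigTheta{\infectionRate\numberOfVertices}}$ of such excursions dominates their possible contribution, again via $\infectionThreshold\in\smallO{\infectionRate\numberOfVertices}$.

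Finally I would bound the time per cycle. The center-infected part is $\mathrm{Exp}(1)$ with mean $1$, while the center-healthy part is stochastically dominated by the time to heal all $I_0=\bigO{\infectionRate\numberOfVertices}$ leaves (re-infection can only cut it short), whose expectation is the harmonic sum $\sum_{i=1}^{I_0}1/i=\bigO{\log\numberOfVertices}$ (the estimate underlying \Cref{pre:maxExponential}); so the expected cycle length is $\bigO{\log\numberOfVertices}$. Combining the geometric bound on the number of cycles with this per-cycle expectation through Wald's equation (\Cref{pre:waldsEquation}), in the generalized form that accommodates non-identical cycle lengths, yields $\E{T}\le 2^{\bigO{\infectionThreshold}}\cdot\bigO{\log\numberOfVertices}=2^{\bigTheta{\infectionThreshold}}\log\numberOfVertices$, as claimed.
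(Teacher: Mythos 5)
Your proposal follows essentially the same route as the paper's proof: decompose the process into center-infected/center-healthy cycles, bound the per-cycle extinction probability by $\prod_{i\le I_0}(1+\infectionRate i^{\infectionExponent})^{-1}\geq e^{-\bigTheta{\infectionThreshold}}$ after arguing that the number of infected leaves stays at $\bigO{\infectionRate\numberOfVertices}$ (the paper caps it at $3\infectionRate\numberOfVertices$ via a gambler's-ruin drift argument on the center-always-infected process, which is the same idea as your union bound over cycles), bound each cycle's length by $\bigO{\log\numberOfVertices}$ via the maximum of exponentials, and combine with a geometric number of cycles and Wald's equation. The step you flag as hardest — converting the high-probability containment into a bound on $\E{T}$ — is treated with the same level of informality in the paper (excursions above $3\infectionRate\numberOfVertices$ are declared negligible because they are exponentially rare in $\infectionRate\numberOfVertices$ and short), so your proposal matches the paper's argument in both substance and rigor.
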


\begin{proof}
    We first show that the process stays at fewer than $3\infectionRate \numberOfVertices$ infected leaves for most of the time even if the center is permanently infected. We then lower bound the probability for the infection to die out in a single center-healthy phase when it starts with at most $3 \infectionRate \numberOfVertices$ infected leaves. Bounding the expected length of center-healthy and center-infected phases concludes the proof.

    Consider the modified process $C'$ that behaves like $\contactProcess$ with the exception that it ignores all of the healing triggers of the center. The number of infected leaves in $C'$ dominates the number of infected leaves in $C$. While there are $I'$ infected leaves in $C'$, leaves heal at a rate of $I'$ and new leaves get infected at a rate of $\infectionRate(\numberOfVertices - I')\leq \infectionRate \numberOfVertices$. Therefore, while $I' \geq 2\infectionRate \numberOfVertices$, leaves heal at least twice as fast as new leaves get infected. That means that between $2\infectionRate\numberOfVertices$ and $3\infectionRate\numberOfVertices$ infected leaves, the discrete version of the process is dominated by a gambler's ruin instance with a biased coin with success probability $1/3$. As this instance takes in expectation an exponential time in $\infectionRate\numberOfVertices$ to reach its upper limit and only a linear time to drop back to its lower bound, the time that $C'$ spends above $3\infectionRate\numberOfVertices$ infected leaves is negligible compared to the entire survival time $T$. As $C'$ dominates $C$, the same holds for $C$. Therefore, in the following we assume that the number of infected leaves stays below $3\infectionRate \numberOfVertices$ without reducing the expected survival time too much with this assumption.

    Consider a state with at most $3\infectionRate \numberOfVertices$ infected leaves and healthy center. Let $E$ be the event that the infection dies out before infecting the center. In a state with $I$ infected leaves, leaves heal at a rate of $I$ and the center gets infected at rate $\infectionRate I^{1+\infectionExponent}$. In order for $E$ to happen, all of the leaves have to heal before the center gets infected. It holds for all $x \in [0,1.59]$ that $e^{-x} \leq 1- x/2$. Note that $\infectionRate \in \smallO{\numberOfVertices^{-\infectionExponent/(1+\infectionExponent)}}$ implies $\infectionRate(\infectionRate \numberOfVertices)^{ \infectionExponent} \in \smallO{1}$. Hence, for sufficiently large $\numberOfVertices$, we can use the previous inequality with $x/2= \infectionRate(\infectionRate \numberOfVertices)^{ \infectionExponent}$. With that, we bound the probability of $E$ by

    \begin{align}
    \Pr{E} &\geq \prod_{i=1}^{3 \infectionRate \numberOfVertices}{\frac{i}{i+\infectionRate i^{1+\infectionExponent}}}\nonumber = \prod_{i=1}^{3 \infectionRate \numberOfVertices}{\frac{1}{1+\infectionRate i^{\infectionExponent}}}\nonumber\\
    &= \prod_{i=1}^{3 \infectionRate \numberOfVertices}{\left(1-\frac{\infectionRate i^{\infectionExponent}}{1+\infectionRate i^{\infectionExponent}}\right)}\nonumber \geq \prod_{i=1}^{3 \infectionRate \numberOfVertices}{\left(1-\infectionRate i^{\infectionExponent}\right)}\nonumber\\
    &\geq \prod_{i=1}^{3 \infectionRate \numberOfVertices}{e^{-2 \infectionRate i^{\infectionExponent}}}\nonumber = e^{-2 \infectionRate \sum_{i=1}^{3 \infectionRate \numberOfVertices}{i^{\infectionExponent}}}\nonumber\\
    &= e^{-\bigTheta{\infectionRate (\infectionRate \numberOfVertices)^{1+\infectionExponent}}}\nonumber \\
    &= e^{-\bigTheta{\infectionThreshold}}.\label{eq:4}
\end{align}

The second to last step uses the Euler-Maclaurin Formula (see \cite{kac2002quantum}).

Let $S$ be the number of center-healthy phases of $\contactProcess$ that start with at most $3\infectionRate\numberOfVertices$ infected leaves before the infection dies out. By \Cref{eq:4}, $S$ is dominated by a geometric random variable with parameter $e^{-\bigTheta{\infectionThreshold}}$. Each center-infected phase lasts in expectation for 1 time unit as it ends when the center heals which happens at rate 1. By \Cref{pre:maxExponential}, each center-healthy phase lasts at most $\log( \numberOfVertices)$ time units in expectation as it ends the latest when the last leaf heals which is determined by the maximum of $\numberOfVertices$ exponential random variables with mean 1. To bound $T$, we also need to consider the time $T'$ spent above $3\infectionRate\numberOfVertices$ infected leaves, but as we argued before, that time is much smaller than the rest of $T$. We get

\begin{align*}
    \E{T} &\leq \E{S} \cdot (1+ \log (\numberOfVertices)) + T'\\
    &\leq e^{\bigTheta{\infectionThreshold}} \cdot (1+ \log (\numberOfVertices)) + T'\\
    &\leq 2^{\bigTheta{\infectionThreshold}}\log( \numberOfVertices).\qedhere
\end{align*}
\end{proof}

For the lower bound, the idea is to look at the process while the number of infected vertices is in between $\infectionRate \numberOfVertices /8$ and $\infectionRate \numberOfVertices/4$ and splitting this range into $\sqrt{\infectionThreshold}$ many equally sized blocks. We then use the center-healthy phases and center-infected phases as the steps of our process. Now in a center-healthy phase, the probability of decreasing the number of infected vertices by more than a block is exponentially small in $\sqrt{\infectionThreshold}$. In a center-healthy phase, the probability of not increasing by a block before healing the center is sub constant. We now build a gambler's ruin game on the $\sqrt{\infectionThreshold}$ many blocks as states by taking as a step a full cycle between two center infections. The probability of decreasing by more than one block is so small that we ignore it. The probability of increasing by a block in total is much higher than the probability of decreasing by one (much more than twice as much). So our process dominates a biased gambler's ruin on $\sqrt{\infectionThreshold}$ many states with a biased coin of probability $2/3$. Therefore, the time it takes to die out is with high probability exponential in $\sqrt{\infectionThreshold}$.

We first bound the probability of healing too many vertices in a center-healthy phase.

\begin{lemma}\label{lem:starHealthy}
Let $G$ be a star with $\numberOfVertices \in \N_{>0}$ leaves. Further, let $\contactProcess$ be a contact process with infection coefficient $\infectionRate \in \R_{>0}$ and infection exponent $\infectionExponent \in \R_{>-1}$ on $G$. Let $x,y \in \N$. Further, let $p^x_y$ be the probability to drop in a center-healthy phase from $x$ infected vertices to at most $y$. Let $z=y$ if $\infectionExponent$ is positive and $z=x$ otherwise. If $\infectionRate z^{\infectionExponent} \leq 1$, then $p^x_y \leq e^{-(x-y)\frac{\infectionRate z^{\infectionExponent}}{2}}$.
\end{lemma}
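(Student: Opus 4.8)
The plan is to write $p^x_y$ as an explicit product of one-step survival probabilities and then bound each factor uniformly. In a center-healthy phase the leaves can never become infected (their infection rate is $0$ while the center is healthy), so the number of infected leaves is monotonically non-increasing, decreasing by exactly one every time a leaf heals. Dropping from $x$ to at most $y$ therefore means passing successively through $x-1, x-2, \dots, y$ before the center becomes infected, which is what ends the phase. In a state with $i$ infected leaves a leaf heals at total rate $i$ while the center becomes infected at rate $\infectionRate i^{1+\infectionExponent}$; by the memorylessness of the competing exponential clocks, the probability that a leaf heals before the center is infected equals $\tfrac{i}{i + \infectionRate i^{1+\infectionExponent}} = \tfrac{1}{1 + \infectionRate i^{\infectionExponent}}$, and these races restart independently across states. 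Hence
\begin{align*}
    p^x_y = \prod_{i=y+1}^{x} \frac{1}{1 + \infectionRate i^{\infectionExponent}},
\end{align*}
a product of $x-y$ factors.

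Next I would bound a single factor by $e^{-\infectionRate z^{\infectionExponent}/2}$. The content-bearing observation is that the definition of $z$ selects the \emph{smallest} value of $i^{\infectionExponent}$ over the range $i \in \{y+1, \dots, x\}$: for $\infectionExponent > 0$ the map $i \mapsto i^{\infectionExponent}$ is increasing, so $i^{\infectionExponent} \geq (y+1)^{\infectionExponent} \geq y^{\infectionExponent} = z^{\infectionExponent}$, whereas for $\infectionExponent < 0$ it is decreasing, so $i^{\infectionExponent} \geq x^{\infectionExponent} = z^{\infectionExponent}$. In either case $\infectionRate i^{\infectionExponent} \geq \infectionRate z^{\infectionExponent}$ throughout the product, so each factor is at most $\tfrac{1}{1+\infectionRate z^{\infectionExponent}}$. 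Writing $b = \infectionRate z^{\infectionExponent}$, the standard inequality $1-t \leq e^{-t}$ gives $\tfrac{1}{1+b} = 1 - \tfrac{b}{1+b} \leq e^{-b/(1+b)}$, and the hypothesis $b \leq 1$ upgrades the exponent via $\tfrac{b}{1+b} \geq \tfrac{b}{2}$, so that $\tfrac{1}{1+b} \leq e^{-b/2}$. This is exactly where the assumption $\infectionRate z^{\infectionExponent} \leq 1$ enters.

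Multiplying this uniform bound over all $x-y$ factors then yields
\begin{align*}
    p^x_y \leq \left(e^{-\infectionRate z^{\infectionExponent}/2}\right)^{x-y} = e^{-(x-y)\frac{\infectionRate z^{\infectionExponent}}{2}},
\end{align*}
which is the claim. I expect the only genuinely delicate point to be the sign-dependent choice of $z$: one must verify that $z^{\infectionExponent}$ is the minimum of $i^{\infectionExponent}$ across the whole range, since that is precisely what makes $\infectionRate z^{\infectionExponent}$ a valid uniform lower bound inside every factor and thus makes the product telescope into a clean exponential. Everything else is the routine exponential-race decomposition plus a one-line elementary estimate; the boundary case $x = y$ is consistent, as it produces an empty product equal to $1$, matching the bound.
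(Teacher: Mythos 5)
Your proposal is correct and follows essentially the same route as the paper: decompose the drop from $x$ to $y$ into a product of exponential-race probabilities $\frac{1}{1+\infectionRate i^{\infectionExponent}}$, use the sign-dependent monotonicity of $i^{\infectionExponent}$ to bound every factor by the one at $z$, and then apply the elementary estimate $\frac{1}{1+b}\leq e^{-b/2}$ for $b=\infectionRate z^{\infectionExponent}\leq 1$. The only difference is cosmetic (you apply $1-t\leq e^{-t}$ before rather than after the $\frac{b}{1+b}\geq\frac{b}{2}$ step).
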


\begin{proof}
    While there are $I \in \N$ infected leaves, leaves heal at a rate of $I$ and the center gets infected at a rate of $\infectionRate I^{1+\infectionExponent}$. Hence, in the next step a leaf heals with a probability of $\frac{I}{I+\infectionRate I^{1+\infectionExponent}} = \frac{1}{1+\infectionRate I^{\infectionExponent}}$. Note that this probability is monotonically decreasing or increasing in $I$ when $\infectionExponent$ is positive or negative respectively. For our choice of $z$, that means that in the interval between $y$ and $x$, this probability is maximized at $z$. Now in order to drop from $x$ infected leaves to $y$ in a center-healthy phase, in each state in between a leaf has to heal. Using $\infectionRate z^{\infectionExponent} \leq 1$, we bound that probability by

    \begin{align*}
        p^x_y &= \prod_{i=y+1}^{x}{\frac{1}{1+\infectionRate i^{\infectionExponent}}}\\
    &\leq \left(\frac{1}{1+\infectionRate z^{\infectionExponent}}\right)^{x-y}\\
    &= \left(1-\frac{\infectionRate z^{\infectionExponent}}{1+\infectionRate z^{\infectionExponent}}\right)^{x-y}\\
    &\leq \left(1-\frac{\infectionRate z^{\infectionExponent}}{2}\right)^{x-y}\\
    &\leq e^{-(x-y)\frac{\infectionRate z^{\infectionExponent}}{2}}.\qedhere
    \end{align*}
\end{proof}

Now we lower bound the probability of infecting enough vertices in a center-infected phase.

\begin{lemma}\label{lem:starInfected}
Let $G$ be a star with $\numberOfVertices \in \N_{>0}$ leaves and let $z\in \R_{>0}$. Further, let $\contactProcess$ be a contact process with infection coefficient $\infectionRate \in (0,1)$ and infection exponent $\infectionExponent \in \R_{>-1}$ on $G$ that starts with infected center and at most $\infectionRate \numberOfVertices/4-\frac{\infectionRate \numberOfVertices}{4 z}$ infected leaves. Let $\infectionRate\numberOfVertices\in \smallOmega{1}$. Then if $z \in \smallO{\infectionRate\numberOfVertices}$, the probability of the event $E$ that the number of infected leaves increases by at least $\frac{\infectionRate \numberOfVertices}{4 z}$ before the center heals is at least $e^{-\bigTheta{1/z}}$.
\end{lemma}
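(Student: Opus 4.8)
The plan is to work in continuous time and to decouple the center's healing clock from the leaf dynamics. While the center is infected, the number of infected leaves $I$ evolves as a birth--death process with birth rate $\infectionRate(\numberOfVertices - I)$ and death rate $I$; note that the infection exponent $\infectionExponent$ does not enter here, since every susceptible leaf has exactly one (infected) neighbour. Meanwhile the center heals after an independent $\mathrm{Exp}(1)$ time $T_c$. Setting $m \defeq \frac{\infectionRate\numberOfVertices}{4z}$ and letting $\tau$ be the first time the leaf count has risen by $m$ under these center-infected dynamics, the event $E$ is exactly $\{\tau < T_c\}$. Because the healing clock fires at rate $1$ independently of everything the leaves do, conditioning on $\tau$ and integrating out $T_c$ gives the exact identity $\Pr{E} = \E{\eulerE[-\tau]}$, and Jensen's inequality applied to the convex map $x \mapsto \eulerE[-x]$ yields $\Pr{E} \geq \eulerE[-\E{\tau}]$. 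It therefore suffices to show $\E{\tau} \in \bigO{1/z}$.

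Next I would control the drift. Since the process starts with at most $\infectionRate\numberOfVertices/4 - m$ infected leaves and $\tau$ is the first time it has increased by $m$, we have $I < \infectionRate\numberOfVertices/4$ throughout $[0,\tau)$. In this range, using $\infectionRate < 1$ so that $\infectionRate + 1 < 2$,
\begin{align*}
    \infectionRate(\numberOfVertices - I) - I = \infectionRate\numberOfVertices - (\infectionRate + 1)I \geq \infectionRate\numberOfVertices - 2 \cdot \frac{\infectionRate\numberOfVertices}{4} = \frac{\infectionRate\numberOfVertices}{2},
\end{align*}
so the instantaneous upward drift of $I$ is at least $\infectionRate\numberOfVertices/2$. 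Feeding this into Dynkin's formula for $I$ at the stopping time $\tau$ gives
\begin{align*}
    \E{I_{t \wedge \tau}} - I_0 = \E{\int_0^{t \wedge \tau}\bigl(\infectionRate(\numberOfVertices - I_s) - I_s\bigr)\,\mathrm{d}s} \geq \frac{\infectionRate\numberOfVertices}{2}\,\E{t \wedge \tau}.
\end{align*}
As the jumps of $I$ have size $1$, the left-hand side is at most $m + 1$, so $\E{t \wedge \tau} \leq \frac{2(m+1)}{\infectionRate\numberOfVertices}$ for every $t$; letting $t \to \infty$ yields $\E{\tau} \leq \frac{2(m+1)}{\infectionRate\numberOfVertices} = \frac{1}{2z} + \frac{2}{\infectionRate\numberOfVertices}$. (The same estimate could alternatively be obtained by running Wald's equation, \Cref{pre:waldsEquation}, on the embedded jump chain of leaf events.)

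It remains to combine the pieces. The hypothesis $z \in \smallO{\infectionRate\numberOfVertices}$ makes the term $\frac{2}{\infectionRate\numberOfVertices}$ equal to $\smallO{1/z}$, so $\E{\tau} \leq \frac{1}{2z} + \smallO{1/z} \in \bigO{1/z}$, and hence $\Pr{E} \geq \eulerE[-\E{\tau}] \geq \eulerE[-\bigTheta{1/z}]$, which is the claim. I expect the main obstacle to be the reduction in the first step: one must argue cleanly that, up to the center-healing time, the leaf count is an autonomous birth--death process independent of the rate-$1$ healing clock (a killed-process argument), so that $\Pr{E} = \E{\eulerE[-\tau]}$ holds as an exact identity rather than a lossy bound. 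A secondary subtlety is that the drift lower bound $\infectionRate\numberOfVertices/2$ must remain valid over the whole interval $[0,\tau)$ --- this is guaranteed precisely by the starting hypothesis $I_0 \leq \infectionRate\numberOfVertices/4 - m$ together with $\infectionRate < 1$ --- and that the error term $\frac{2}{\infectionRate\numberOfVertices}$ be negligible against $1/z$, which is exactly where $z \in \smallO{\infectionRate\numberOfVertices}$ enters.
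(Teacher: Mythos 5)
Your argument is correct, but it takes a genuinely different route from the paper. The paper discretizes: it observes that each jump event of the process is the center healing with probability at most $1/(\infectionRate\numberOfVertices)$, so the number $S$ of leaf events before the center heals dominates a geometric variable and satisfies $\Pr{S \geq \infectionRate\numberOfVertices/z} \geq \mathrm{e}^{-\bigTheta{1/z}}$; it then applies a Chernoff bound to show that, below $\infectionRate\numberOfVertices/4$ infected leaves, at least a $5/8$ fraction of those steps are infections with probability $1 - \mathrm{e}^{-\bigTheta{\infectionRate\numberOfVertices/z}}$, which forces a net gain of $\infectionRate\numberOfVertices/(4z)$. You instead stay in continuous time, exploit the independence of the center's rate-$1$ healing clock to get the exact identity $\Pr{E} = \E{\mathrm{e}^{-\tau}}$, apply Jensen to reduce to bounding $\E{\tau}$, and control $\E{\tau}$ by a drift/optional-stopping (Dynkin) argument using that the upward drift is at least $\infectionRate\numberOfVertices/2$ while $I < \infectionRate\numberOfVertices/4$ (this is where $\infectionRate < 1$ and the hypothesis $I_0 \leq \infectionRate\numberOfVertices/4 - \infectionRate\numberOfVertices/(4z)$ enter, exactly as in the paper's step-probability bound of $2/3$). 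Your route yields a cleaner and slightly sharper bound, $\Pr{E} \geq \mathrm{e}^{-1/(2z) - \smallO{1/z}}$, and avoids conditioning on a random step count; the paper's route is more elementary, using only the geometric domination and the Chernoff bound already stated in its preliminaries rather than a continuous-time martingale identity. Both proofs use the hypothesis $z \in \smallO{\infectionRate\numberOfVertices}$ in the same essential place (you to absorb the $2/(\infectionRate\numberOfVertices)$ error into $\bigO{1/z}$, the paper to make the Chernoff failure probability negligible), and I see no gap in your reduction $E = \{\tau < T_c\}$, which is justified by the standard graphical construction in which the center's healing clock is independent of all leaf clocks.
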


\begin{proof}
    We first show that it is likely that there are at least $\frac{\infectionRate \numberOfVertices}{ z}$ steps that do not heal the center and to then lower bound the probability that enough of these steps infect new leaves.

    While the center is infected, each leaf either heals at a rate of 1 or gets infected at a rate of $\infectionRate$ depending on whether it is infected or not. Thus, each leaf changes its state at a rate of at least $\infectionRate$. The center heals at rate one. That means that each step has a probability of at most $\frac{1}{\infectionRate\numberOfVertices}$ to heal the center. Therefore, the number of steps $S$ before the center heals dominates a geometric random variable $X$ with parameter $\frac{1}{\infectionRate\numberOfVertices}$. We get

    \begin{align*}
    \Pr{S \geq \frac{\infectionRate \numberOfVertices}{z}}
    &\geq \Pr{X \geq \frac{\infectionRate \numberOfVertices}{z}}\\
    &\geq \left(1-\frac{1}{\infectionRate \numberOfVertices}\right)^{ \frac{\infectionRate \numberOfVertices}{ z}}\\
    &\geq e^{- \frac{1}{\infectionRate \numberOfVertices} \frac{\infectionRate \numberOfVertices}{ z}}\\
    &\geq e^{-\bigTheta{\frac{1}{z}}}.
    \end{align*}

Now when there are at least $\frac{\infectionRate \numberOfVertices}{z}$ steps before the center heals, then if at least $5/8$-th of those infect new vertices that implies $E$. While there are at most $\infectionRate\numberOfVertices/4$ infected leaves, leaves heal at a rate of at most $\infectionRate\numberOfVertices/4$ and new vertices get infected at a rate of at least $\infectionRate(\numberOfVertices-\infectionRate\numberOfVertices/4)\geq \infectionRate\numberOfVertices/2$. Hence, each step has a probability of at least $2/3$ to infect a leaf. That means that the number of the steps out of the first $S$ steps that infect a new leaf dominates a binomial random variable $B \sim \text{Bin}(S,2/3)$. We get using Chernoff bounds (\Cref{pre:chernoff})

\begin{align*}
    \Pr{E}[S \geq \frac{\infectionRate \numberOfVertices}{z}] &\geq \Pr{B \geq \frac{5}{8}S}[S \geq \frac{\infectionRate \numberOfVertices}{z}]\\
    &\geq \Pr{B \geq (1-\frac{1}{16})\E{B}}[S \geq \frac{\infectionRate \numberOfVertices}{z}]\\
    &\geq 1- e^{-\frac{1}{2 \cdot 16^2}\cdot \frac{\infectionRate\numberOfVertices}{z}}.
\end{align*}

This gives us a lower bound for the probability of $E$ of

\begin{align*}
    \Pr{E} &\geq \Pr{S \geq \frac{\infectionRate \numberOfVertices}{z}} \cdot \Pr{E}[S \geq \frac{\infectionRate \numberOfVertices}{z}]\\
    &\geq e^{-\bigTheta{\frac{1}{z}}} \cdot (1- e^{-\frac{1}{2 \cdot 16^2}\cdot \frac{\infectionRate\numberOfVertices}{z}})\\
    &\geq e^{-\bigTheta{\frac{1}{z}}}.\qedhere
\end{align*}
\end{proof}

Putting those results together gives us the following lower bound.

\begin{theorem}\label{thm:starLowerTime}
Let $G$ be a star with $\numberOfVertices \in \N_{>0}$ leaves. Further, let $\contactProcess$ be a contact process with infection coefficient $\infectionRate \in (0,1)$ and infection exponent $\infectionExponent \in \R_{>-1}$ on $G$ that starts with infected center and at least $\infectionRate \numberOfVertices/4$ infected leaves. Let $T$ be the survival time of $C$. If $\infectionRate (\infectionRate \numberOfVertices)^{\infectionExponent} \leq 1$ and $\infectionThreshold \in \smallOmega{1}$, then a.a.s. it holds $T \geq 2^{\bigTheta{\sqrt{\infectionThreshold}}}$.
\end{theorem}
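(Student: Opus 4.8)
The plan is to analyse the process at the coarser scale of the range of infected leaves between $\infectionRate\numberOfVertices/8$ and $\infectionRate\numberOfVertices/4$, which I partition into $\bigTheta{\sqrt{\infectionThreshold}}$ consecutive \emph{blocks} of equal size $\bigTheta{\infectionRate\numberOfVertices/\sqrt{\infectionThreshold}}$. I observe the process only at the \emph{cycle boundaries}, i.e.\ at the successive times at which the center becomes infected, and track the index of the block containing the current number of infected leaves. The goal is to show that this block index stochastically dominates a gambler's ruin walk with up-probability $2/3$ on $\bigTheta{\sqrt{\infectionThreshold}}$ states, the bottom of the range being the ruin state. Granting this, \Cref{pre:gamblersRuin} shows that descending from the top of the range to the bottom requires $2^{\bigTheta{\sqrt{\infectionThreshold}}}$ cycles a.a.s., and since every cycle contains a center-infected phase whose duration is an independent $\mathrm{Exp}(1)$ variable, the elapsed time over that many cycles is $2^{\bigTheta{\sqrt{\infectionThreshold}}}$ a.a.s.; as the process starts at the top of the range, this is a lower bound on $T$.

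The heart of the argument is a per-cycle drift estimate that combines the two preceding lemmas. For the upward move I apply \Cref{lem:starInfected} with $z\defeq 2\sqrt{\infectionThreshold}$, chosen so that the guaranteed increment $\infectionRate\numberOfVertices/(4z)$ equals one block; its success probability $\eulerE[-\bigTheta{1/z}]=\eulerE[-\bigTheta{1/\sqrt{\infectionThreshold}}]$ is $1-\smallO{1}$ because $\infectionThreshold\in\smallOmega{1}$, and the hypotheses $\infectionRate\numberOfVertices\in\smallOmega{1}$ and $z\in\smallO{\infectionRate\numberOfVertices}$ both follow from the standing assumptions $\infectionRate(\infectionRate\numberOfVertices)^{\infectionExponent}\leq 1$ and $\infectionThreshold\in\smallOmega{1}$. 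Hence, whenever it is at least one block below the top, a center-infected phase raises the number of infected leaves by at least one block with probability $1-\smallO{1}$. For the downward move I apply \Cref{lem:starHealthy} with the drop $x-y$ equal to one block and $z$ of order $\infectionRate\numberOfVertices$ (any point of the range): the exponent becomes $(x-y)\,\infectionRate z^{\infectionExponent}/2=\bigTheta{\infectionRate^{2+\infectionExponent}\numberOfVertices^{1+\infectionExponent}/\sqrt{\infectionThreshold}}=\bigTheta{\infectionThreshold/\sqrt{\infectionThreshold}}=\bigTheta{\sqrt{\infectionThreshold}}$, so a single center-healthy phase drops the number of infected leaves by more than one block only with probability $\eulerE[-\bigTheta{\sqrt{\infectionThreshold}}]$, while the required condition $\infectionRate z^{\infectionExponent}\leq 1$ is, up to constants, exactly $\infectionRate(\infectionRate\numberOfVertices)^{\infectionExponent}\leq 1$. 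Reading the block index at cycle boundaries, each cycle therefore raises it by at least one with probability $1-\smallO{1}\geq 2/3$ and lowers it at all only with probability $\smallO{1}\leq 1/3$, which gives the desired domination.

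To finish I would invoke \Cref{pre:gamblersRuin}: with up-probability $2/3$ on $\bigTheta{\sqrt{\infectionThreshold}}$ states, the walk started one block below the top reaches the ruin state before returning to the top only with probability $2^{-\bigTheta{\sqrt{\infectionThreshold}}}$. Consequently the number of excursions from the top before the walk first reaches the bottom dominates a geometric variable with parameter $2^{-\bigTheta{\sqrt{\infectionThreshold}}}$ and is therefore at least $2^{\bigTheta{\sqrt{\infectionThreshold}}}$ a.a.s.; each such excursion consumes at least one cycle, and since the center-infected durations of distinct cycles are independent $\mathrm{Exp}(1)$ variables, their sum over the first $2^{\bigTheta{\sqrt{\infectionThreshold}}}$ cycles concentrates and is $2^{\bigTheta{\sqrt{\infectionThreshold}}}$ a.a.s. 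This lower-bounds $T$.

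The step I expect to be the main obstacle is turning the per-cycle drift into a clean gambler's ruin domination, for two reasons. First, within one cycle the upward (center-infected) and downward (center-healthy) movements are entangled; reading the block index only at cycle boundaries decouples them into a single net increment per cycle. Second, and more seriously, the downward move can exceed one block, which the bare gambler's ruin does not allow: here I rely on \Cref{lem:starHealthy}, which makes a drop of $k$ blocks in one cycle have probability $\eulerE[-\bigTheta{k\sqrt{\infectionThreshold}}]$, so that a union bound over the $2^{\bigTheta{\sqrt{\infectionThreshold}}}$ relevant cycles rules out any multi-block drop a.a.s.\ once the target survival time $2^{c\sqrt{\infectionThreshold}}$ is fixed with a small enough constant $c$. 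Reconciling this choice of $c$ with the fact that the \emph{upward} failure probability is only $\smallO{1}$ rather than exponentially small — so that no union bound over cycles is available for it and the drift comparison is genuinely needed — is the delicate bookkeeping of the proof.
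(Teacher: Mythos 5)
Your proposal follows essentially the same route as the paper: the same decomposition of the range $[\infectionRate\numberOfVertices/8,\,\infectionRate\numberOfVertices/4]$ into $\sqrt{\infectionThreshold}$ blocks, the same observation of the block index at the times the center gets infected, the same use of \Cref{lem:starInfected} (with $z=\bigTheta{\sqrt{\infectionThreshold}}$) and \Cref{lem:starHealthy} to obtain domination of a $2/3$-biased gambler's ruin on $\bigTheta{\sqrt{\infectionThreshold}}$ states, and the same conclusion via at least constant expected time per cycle. The one point the paper covers that you omit is that a center-infected phase can itself decrease the leaf count on its \smallO{1}-probability failure event — since no union bound over the $2^{\bigTheta{\sqrt{\infectionThreshold}}}$ cycles is available for a merely \smallO{1} probability, you must note separately (as the paper does) that below $\infectionRate\numberOfVertices/4$ infected leaves infections outpace healings by a constant factor during center-infected phases, so a drop of a full block within such a phase is also exponentially unlikely.
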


\begin{proof}
    We look at the process while the number of infected vertices is in between $\infectionRate \numberOfVertices /8$ and $\infectionRate \numberOfVertices/4$ and split this interval into $\sqrt{\infectionThreshold}$ many equally sized blocks. We then only consider in which of those blocks the number of infected vertices is each time the center gets infected. We show that the resulting process with high probability dominates a gambler's ruin instance with a biased coin of probability $2/3$ which gives us the desired bound.

    We define the process $X$ that is coupled to $C$ as follows. $X$ transitions to different values at exactly the times at which the center gets infected in $C$. It takes as values the number of infected leaves in $C$ divided by $\frac{\infectionRate\numberOfVertices}{8\sqrt{\infectionThreshold}}$ rounded down to an integer. To bound the survival time $T$, we consider the process until one of two events happens: Either $X$ decreases by more than 1 in a step or it reaches $\sqrt{\infectionThreshold}$. We only consider $C$ below $\infectionRate\numberOfVertices/4$ infected vertices. Every time when a center gets infected while more leaves are infected we ignore it.

    In order for $X$ to reduce by more than 1 in a step, either the center-infected phase or the center-healthy phase has to reduce the number of infected leaves by at least $\frac{\infectionRate\numberOfVertices}{16\sqrt{\infectionThreshold}}$. While below $\infectionRate\numberOfVertices/4$ infected leaves, center-infected phases heal vertices at least twice as fast as they heal leaves which makes it exponentially unlikely to reduce the number of infected leaves by too much. For the center-healthy phases, by \Cref{lem:starHealthy} the probability to reduce the number of infected leaves in a center-healthy phase by $\frac{\infectionRate\numberOfVertices}{16\sqrt{\infectionThreshold}}$ while the number of infected leaves is between $\infectionRate\numberOfVertices/8$ and $\infectionRate\numberOfVertices/4$ is at most $e^{-\bigTheta{\frac{\infectionRate\numberOfVertices}{\sqrt{\infectionThreshold}}\cdot \infectionRate (\infectionRate\numberOfVertices)^\infectionExponent}}= e^{-\bigTheta{\sqrt{\infectionThreshold}}}$. That means that the time until that event happens is at least geometrically distributed with probability $e^{-\bigTheta{\sqrt{\infectionThreshold}}}$ which means that it takes a.a.s. $e^{\bigTheta{\sqrt{\infectionThreshold}}}$ until then.
    
    Now assume that $X$ never reduces by more than 1 in a step. By \Cref{lem:starInfected}, the probability that $X$ increases by at least 1 in a step is at least $e^{-\bigTheta{1/\sqrt{\infectionThreshold}}}$ which is at least $2/3$ for sufficiently large $\numberOfVertices$ as $\infectionThreshold \in \smallOmega{1}$. That means that $X$ dominates a gambler's ruin instance with a biased coin of probability $2/3$ in the range between $\sqrt{\infectionThreshold}$ and $2\sqrt{\infectionThreshold}$. This gambler's ruin instance has a.a.s. a time of $2^{\bigTheta{\sqrt{\infectionThreshold}}}$ until it reaches its lower bound.

    Now the infection cannot die out before $X$ reaches $\sqrt{\infectionThreshold}$. We argued that this does not happen before either $X$ reduces by more than 1 in a step or the gambler's ruin instance reaches $\sqrt{\infectionThreshold}$. As shown before, both of those events take a.a.s. $2^{\bigTheta{\sqrt{\infectionThreshold}}}$ time. As each phase considered in $X$ needs the center to heal and then infect which takes at least 1 time unit in expectation, this gives us the desired bound for $T$.
\end{proof}

We now show that starting with only the center infected, we reach a state with $\infectionRate\numberOfVertices/4$ infected leaves with high probability.

\begin{lemma}\label{lem:starLowerReach}
    Let $G$ be a star with $\numberOfVertices \in \N_{>0}$ leaves. Further, let $\contactProcess$ be a contact process with infection coefficient $\infectionRate \in (0,1)$ and infection exponent $\infectionExponent \in \R_{>-1}$ on $G$ that starts with infected center and susceptible leaves. If $\infectionRate (\infectionRate \numberOfVertices)^{\infectionExponent} \leq 1$ and $\infectionThreshold \in \smallOmega{\log(n)^4}$, $C$ reaches a state with $\infectionRate\numberOfVertices/4$ infected leaves asymptotically almost surely.
\end{lemma}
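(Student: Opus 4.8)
The plan is to reuse the block-and-gambler's-ruin machinery of \Cref{thm:starLowerTime}, but now to show that the block index climbs all the way up to $\infectionRate\numberOfVertices/4$ rather than merely taking a long time to fall. I would partition the range $[0,\infectionRate\numberOfVertices/4]$ of possible infected-leaf counts into $2\sqrt{\infectionThreshold}$ blocks of size $\frac{\infectionRate\numberOfVertices}{8\sqrt{\infectionThreshold}}$ and, exactly as in \Cref{thm:starLowerTime}, define a coupled process $X$ recording the current block at every time the center gets (re-)infected. The two driving estimates are already available: by \Cref{lem:starInfected} with $z=2\sqrt{\infectionThreshold}$ a single center-infected phase raises the count by one block with probability at least $e^{-\bigTheta{1/\sqrt{\infectionThreshold}}}\geq 2/3$ for large $\numberOfVertices$ (using $\infectionThreshold\in\smallOmega{1}$), while by \Cref{lem:starHealthy} a center-healthy phase lowers the count by a full block only with probability $e^{-\bigTheta{\sqrt{\infectionThreshold}}}$ (the exponent computation is identical to the one in \Cref{thm:starLowerTime}). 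Hence $X$ stochastically dominates an up-biased gambler's-ruin walk with success probability $2/3$, up to the exponentially rare multi-block drops, which union-bound to $\smallO{1}$ over the $\bigO{\sqrt{\infectionThreshold}}$ steps needed.

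The one genuine difference from \Cref{thm:starLowerTime} is the lower end: the walk now starts at the very bottom (only the center infected), where the gambler's-ruin success probability degenerates, so I first have to get the process safely off the ground. For this I would analyse the first center-infected phase in isolation. Starting from zero infected leaves with the center infected, \Cref{lem:starInfected} with $z=2\sqrt{\infectionThreshold}/b_0$ shows that the count reaches block $b_0$ before the center heals with probability at least $e^{-\bigTheta{b_0/\sqrt{\infectionThreshold}}}$. Choosing $b_0=\bigTheta{\log(\numberOfVertices)^2}$, the hypothesis $\infectionThreshold\in\smallOmega{\log(\numberOfVertices)^4}$ (equivalently $\sqrt{\infectionThreshold}\in\smallOmega{\log(\numberOfVertices)^2}$) makes this probability $1-\bigO{\log(\numberOfVertices)^2/\sqrt{\infectionThreshold}}=1-\smallO{1}$. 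This is precisely where the stronger threshold on $\infectionThreshold$, compared with the mere $\smallOmega{1}$ of \Cref{thm:starLowerTime}, is spent.

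From block $b_0$ the up-biased walk takes over: by \Cref{pre:gamblersRuin} with $p=2/3$ (so $q/p=1/2$), the probability of reaching the top block $2\sqrt{\infectionThreshold}$, that is, $\infectionRate\numberOfVertices/4$ infected leaves, before returning to block $0$ is $\frac{1-2^{-b_0}}{1-2^{-2\sqrt{\infectionThreshold}}}=1-\smallO{1}$, since $b_0=\bigTheta{\log(\numberOfVertices)^2}\in\smallOmega{1}$. I expect the main obstacle to be handling the low-leaf-count regime rigorously: unlike in \Cref{thm:starLowerTime}, where the walk lives in $[\infectionRate\numberOfVertices/8,\infectionRate\numberOfVertices/4]$ and can never die, here the infection can go extinct during a center-healthy phase whenever the count is small. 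I would therefore argue that, conditioned on reaching block $b_0$ and on the walk never returning to block $0$ (which already fails only with probability $2^{-b_0}=\smallO{1}$), every center-healthy phase is entered at a count of at least a block, up to the rare drops controlled by \Cref{lem:starHealthy}; from such a count the extinction probability is negligible by the same Euler--Maclaurin product estimate underlying \Cref{eq:4}. Combining the first-phase success, the gambler's-ruin success, and the union bound over the rare drop and extinction events yields that $\infectionRate\numberOfVertices/4$ infected leaves is reached asymptotically almost surely.
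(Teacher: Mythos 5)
There is a genuine gap in how you handle the low-leaf-count regime, and it is not merely a matter of "rigor": your block decomposition is miscalibrated in $\infectionExponent$. You partition $[0,\infectionRate\numberOfVertices/4]$ into blocks of the \emph{uniform} size $\frac{\infectionRate\numberOfVertices}{8\sqrt{\infectionThreshold}}$ and claim that the probability that a center-healthy phase drops the count by a full block is $e^{-\bigTheta{\sqrt{\infectionThreshold}}}$ "identically to \Cref{thm:starLowerTime}". That computation is only identical at the top of the range, where the walk of \Cref{thm:starLowerTime} lives ($z=\bigTheta{\infectionRate\numberOfVertices}$ in \Cref{lem:starHealthy}). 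At block $k$ one has $z=\bigTheta{k\infectionRate\numberOfVertices/\sqrt{\infectionThreshold}}$, and for $\infectionExponent>0$ the exponent in \Cref{lem:starHealthy} becomes $\bigTheta{k^{\infectionExponent}\infectionThreshold^{(1-\infectionExponent)/2}}$ rather than $\bigTheta{\sqrt{\infectionThreshold}}$. The same scaling governs extinction: from $x$ infected leaves a center-healthy phase kills the infection with probability roughly $e^{-\bigTheta{\infectionRate x^{1+\infectionExponent}}}$, and at your lift-off height $b_0=\bigTheta{\log(\numberOfVertices)^2}$ blocks this exponent is $\bigTheta{\infectionThreshold^{(1-\infectionExponent)/2}\log(\numberOfVertices)^{2(1+\infectionExponent)}}$. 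For $\infectionExponent\geq 1$ (and $\infectionThreshold$ well above $\log(\numberOfVertices)^4$, which the lemma must allow) this is $\bigO{1}$ or even $\smallO{1}$, i.e., a \emph{single} center-healthy phase entered at height $b_0$ kills the infection with constant probability, or with probability $1-\smallO{1}$. So conditioning on "never returning to block $0$" does not save you: the death zone extends over many of your lowest blocks, and your appeal to "the same Euler--Maclaurin estimate underlying \Cref{eq:4}" points at a bound in the wrong direction (that estimate lower-bounds the extinction probability; you need an upper bound, and none that is uniform over your blocks exists for large $\infectionExponent$).

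The fix is the one the paper uses: the lift-off target must depend on $\infectionExponent$. The paper climbs in the first center-infected phase to $\frac{\infectionRate\numberOfVertices}{\infectionThreshold^{1/(2(1+\infectionExponent))}}$ infected leaves, which is exactly the height $x$ at which $\infectionRate x^{1+\infectionExponent}=\sqrt{\infectionThreshold}$, so that both the per-phase extinction probability and the per-phase large-drop probability are $e^{-\bigTheta{\infectionThreshold^{1/4}}}$ and smaller on the relevant interval. From there the paper does not need a second gambler's ruin at all: it observes that traversing the interval down to half that height requires at least $\infectionThreshold^{1/4}\in\smallOmega{\log \numberOfVertices}$ center-healthy phases (a.a.s., by \Cref{lem:starHealthy}), while each interleaved center-infected phase independently reaches $\infectionRate\numberOfVertices/4$ with constant probability by \Cref{lem:starInfected}; a union bound finishes. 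Your first-phase and gambler's-ruin components are individually sound (and your walk would work for, say, $\infectionExponent\in(-1,1)$ with some care in the union bound), but as written the argument does not cover all $\infectionExponent>-1$, which the lemma requires.
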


\begin{proof}
    We show that asymptotically almost surely we reach a state from which we need a super-constant amount of center-healthy phases for the infection to die out. We then show that each center-infected phase has a constant probability to reach $\infectionRate\numberOfVertices/4$ infected leaves. That makes it very unlikely not to reach $\infectionRate\numberOfVertices/4$ infected leaves before the infection dies out.

    By \Cref{lem:starInfected}, the probability that the process reaches a state with at least $\frac{\infectionRate\numberOfVertices}{ \infectionThreshold^{\frac{1}{2(1+\infectionExponent)}}}$ infected leaves is at least $e^{-\bigTheta{1/\infectionThreshold^{\frac{1}{2(1+\infectionExponent)}}}}$. Now consider phases of the process that start and end with the center infecting or the infection dying out. By \Cref{lem:starHealthy} between $\frac{\infectionRate\numberOfVertices}{2 \infectionThreshold^{\frac{1}{2(1+\infectionExponent)}}}$ and $\frac{\infectionRate\numberOfVertices}{ \infectionThreshold^{\frac{1}{2(1+\infectionExponent)}}}$ infected leaves, the probability $p$ to heal more than $\frac{\infectionRate\numberOfVertices}{ \infectionThreshold^{\frac{1}{2(1+\infectionExponent)}}\cdot \sqrt[4]{\infectionThreshold}}$ is at most

    \begin{align*}
        p &\leq e^{-\frac{\infectionRate\numberOfVertices}{ \infectionThreshold^{\frac{1}{2(1+\infectionExponent)}}\cdot \sqrt[4]{\infectionThreshold}} \cdot \infectionRate (\frac{\infectionRate\numberOfVertices}{2 \infectionThreshold^{\frac{1}{(1+\infectionExponent)}}})^\infectionExponent}\\
        &\leq e^{-\frac{\infectionThreshold}{\infectionThreshold^{\frac{1+\infectionExponent}{2(1+\infectionExponent)}}\sqrt[4]{\infectionThreshold}}}\\
        &\leq e^{-\sqrt[4]{\infectionThreshold}}.
    \end{align*}

    As $\infectionThreshold \in \smallOmega{\log(n)^4}$, that means that asymptotically almost surely, there are at least $\sqrt[4]{\infectionThreshold}$ many of those phases before the infection dies out. By \Cref{lem:starInfected}, each of those phases has a constant probability to infect $\infectionRate\numberOfVertices/4$ many leaves. Therefore we reach a state with $\infectionRate\numberOfVertices/4$ infected leaves before the infection dies out asymptotically almost surely.
\end{proof}

Plugging in the values for $\infectionRate$ and combining the previous lemmas gives us the following bounds

\begin{corollary}
    \label{cor:star}
Let $G$ be a star with $\numberOfVertices \in \N_{>0}$ leaves. Further, let $\contactProcess$ be a contact process with infection coefficient $\infectionRate \in \R_{>0}$ and infection exponent $\infectionExponent \in \R_{>-1}$ on $G$ that starts with infected center and susceptible leaves. Let $T$ be the survival time of $\contactProcess$.

\begin{enumerate}
    \item If $\infectionRate \in \smallOmega{\numberOfVertices^{-1/2-\frac{\infectionExponent}{2(2+\infectionExponent)}}\log (\numberOfVertices)^{4/(2+\infectionExponent)}}$ then $T$ is a.a.s. super-polynomial in $\numberOfVertices$.
    \item If $\infectionRate \in \bigO{\numberOfVertices^{-1/2-\frac{\infectionExponent}{2(2+\infectionExponent)}}}$ then $\E{T}$ is at most logarithmic in $\numberOfVertices$.\qedhere
\end{enumerate}
\end{corollary}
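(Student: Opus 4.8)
The plan is to derive both parts of \Cref{cor:star} directly from \Cref{lem:starUpper,thm:starLowerTime,lem:starLowerReach} by rewriting the threshold on $\infectionRate$ in terms of the governing quantity $\infectionThreshold=\infectionRate^2\numberOfVertices(\infectionRate\numberOfVertices)^{\infectionExponent}=\infectionRate^{2+\infectionExponent}\numberOfVertices^{1+\infectionExponent}$, and by using the monotonicity of the survival time in $\infectionRate$ from \Cref{sec:preliminaries} to dispose of the side conditions of those lemmas. The one computation that drives everything is the exponent identity $-\tfrac12-\tfrac{\infectionExponent}{2(2+\infectionExponent)}=-\tfrac{1+\infectionExponent}{2+\infectionExponent}$, so the threshold reads $\infectionRate\approx\numberOfVertices^{-(1+\infectionExponent)/(2+\infectionExponent)}$ up to poly-logarithmic factors. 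Substituting $\infectionRate=\numberOfVertices^{-(1+\infectionExponent)/(2+\infectionExponent)}$ into $\infectionThreshold$ gives $\infectionThreshold=\bigTheta{1}$, while substituting the full part-1 threshold $\numberOfVertices^{-(1+\infectionExponent)/(2+\infectionExponent)}\log(\numberOfVertices)^{4/(2+\infectionExponent)}$ gives $\infectionThreshold=\bigTheta{\log(\numberOfVertices)^4}$; these are the two values around which the dichotomy turns.

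For the first claim I would first note that $\infectionRate\in\smallOmega{\numberOfVertices^{-(1+\infectionExponent)/(2+\infectionExponent)}\log(\numberOfVertices)^{4/(2+\infectionExponent)}}$, via the monotone dependence of $\infectionThreshold$ on $\infectionRate$, gives $\infectionThreshold\in\smallOmega{\log(\numberOfVertices)^4}$, which is exactly the hypothesis of \Cref{lem:starLowerReach} and far more than \Cref{thm:starLowerTime} needs. The remaining hypothesis $\infectionRate(\infectionRate\numberOfVertices)^{\infectionExponent}\le 1$ shared by both lemmas can fail for large $\infectionRate$, so I would invoke monotonicity to replace $\infectionRate$ by a representative $\infectionRate'\le\infectionRate$ lying just above the threshold, obtained by capping the ratio of $\infectionRate$ to the threshold function at $\log\numberOfVertices$. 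Writing this factor as $h\le\log\numberOfVertices$ with $h\to\infty$, one has $\infectionThreshold'=h^{2+\infectionExponent}\log(\numberOfVertices)^4\in\smallOmega{\log(\numberOfVertices)^4}$ while $\infectionRate'(\infectionRate'\numberOfVertices)^{\infectionExponent}=h^{1+\infectionExponent}\numberOfVertices^{-1/(2+\infectionExponent)}\log(\numberOfVertices)^{\bigO{1}}\to 0$, so both hypotheses hold for $\infectionRate'$. Since the survival time dominates that of the $\infectionRate'$-process, it suffices to treat $\infectionRate'$: \Cref{lem:starLowerReach} yields that a.a.s.\ the process reaches $\infectionRate'\numberOfVertices/4$ infected leaves, and \Cref{thm:starLowerTime} then yields a.a.s.\ $T\ge 2^{\bigTheta{\sqrt{\infectionThreshold'}}}$. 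As $\sqrt{\infectionThreshold'}\in\smallOmega{\log(\numberOfVertices)^2}$, this bound is super-polynomial in $\numberOfVertices$, which is the first claim.

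For the second claim I would reduce \emph{upward} instead: since $\infectionRate\in\bigO{\numberOfVertices^{-(1+\infectionExponent)/(2+\infectionExponent)}}$, there is a constant $C$ with $\infectionRate\le\infectionRate''\defeq C\numberOfVertices^{-(1+\infectionExponent)/(2+\infectionExponent)}$, so $\E{T}\le\E{T''}$ by monotonicity and it suffices to bound the survival time $T''$ of the $\infectionRate''$-process. Here the two hypotheses of \Cref{lem:starUpper} hold by pure exponent comparison: $(1+\infectionExponent)/(2+\infectionExponent)<1$ gives $\infectionRate''\in\smallOmega{\numberOfVertices^{-1}}$, and $(1+\infectionExponent)/(2+\infectionExponent)>\infectionExponent/(1+\infectionExponent)$, which reduces to $1>0$, gives $\infectionRate''\in\smallO{\numberOfVertices^{-\infectionExponent/(1+\infectionExponent)}}$. \Cref{lem:starUpper} then gives $\E{T''}\le 2^{\bigTheta{\infectionThreshold''}}\log(\numberOfVertices)$, and because $\infectionThreshold''=\bigTheta{1}$ the prefactor $2^{\bigTheta{\infectionThreshold''}}$ is constant, so $\E{T}=\bigO{\log(\numberOfVertices)}$.

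The main obstacle is not probabilistic but lies in the bookkeeping of the side conditions: each of the three lemmas is valid only in a band of $\infectionRate$ (capped above by $\infectionRate(\infectionRate\numberOfVertices)^{\infectionExponent}\le 1$ or by $\infectionRate\in\smallO{\numberOfVertices^{-\infectionExponent/(1+\infectionExponent)}}$, and, for the upper bound, capped below by $\infectionRate\in\smallOmega{\numberOfVertices^{-1}}$), whereas the corollary quantifies over the entire $\smallOmega$- or $\bigO$-range of $\infectionRate$. The delicate point is choosing the monotonicity representatives $\infectionRate'$ and $\infectionRate''$ so that they land inside the correct band while still forcing $\infectionThreshold$ to have the growth ($\smallOmega{\log(\numberOfVertices)^4}$ for the lower bound, $\bigTheta{1}$ for the upper bound) that makes \Cref{thm:starLowerTime} and \Cref{lem:starUpper} produce the stated super-polynomial and logarithmic behaviour. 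Once the exponent arithmetic above is recorded, both parts follow immediately.
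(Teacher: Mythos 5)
Your proposal is correct and follows essentially the same route as the paper: both parts are obtained by combining \Cref{lem:starLowerReach} with \Cref{thm:starLowerTime} (lower bound) and \Cref{lem:starUpper} (upper bound), using monotonicity of the survival time in \infectionRate to pass to a representative coefficient inside the band where the lemmas' side conditions hold, together with the exponent identity showing $\infectionThreshold = \infectionRate^{2+\infectionExponent}\numberOfVertices^{1+\infectionExponent}$ is $\bigTheta{1}$ at the threshold and $\smallOmega{\log(\numberOfVertices)^4}$ above it. Your explicit choice of the representative $\infectionRate'$ (capping the excess over the threshold at $\log \numberOfVertices$) is just a concrete instantiation of the paper's observation that a suitable $\infectionRate$ in $\smallO{\numberOfVertices^{-\infectionExponent/(1+\infectionExponent)}} \cap \smallO{1}$ always exists.
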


\begin{proof}
    First consider $\infectionRate \in \smallOmega{\numberOfVertices^{-1/2-\frac{\infectionExponent}{2(2+\infectionExponent)}}\log( \numberOfVertices)^{4/(2+\infectionExponent)}}$. We show the survival time lower bound for $\infectionRate \in \smallO{\numberOfVertices^{-\infectionExponent/(1+\infectionExponent)}}$ and $\infectionRate \in \smallO{1}$ as it also holds for all larger $\infectionRate$ then because of the monotonicity of the survival time. Note that such a $\infectionRate$ always exists as $-1/2-\frac{\infectionExponent}{2(2+\infectionExponent)}<-\infectionExponent/(1+\infectionExponent)$ and $-1/2-\frac{\infectionExponent}{2(2+\infectionExponent)}<0$ for all $\infectionExponent \in \R \setminus [-2,-1]$. Now $\infectionRate \in \smallO{\numberOfVertices^{-\infectionExponent/(1+\infectionExponent)}}$ and $\infectionRate \in \smallO{1}$ imply both $\infectionRate \leq 1$ and $\infectionRate(\infectionRate\numberOfVertices)^{\infectionExponent}\leq 1$ for sufficiently large $\numberOfVertices$. The fact that $\infectionRate \in \smallOmega{\numberOfVertices^{-1/2-\frac{\infectionExponent}{2(2+\infectionExponent)}}\log( \numberOfVertices)^{4/(2+\infectionExponent)}}$ implies that $\infectionThreshold \in \smallOmega{\log(\numberOfVertices)^4}$. Hence, both \Cref{thm:starLowerTime} and \Cref{lem:starLowerReach} are applicable.

    By \Cref{lem:starLowerReach}, $\contactProcess$ reaches a state with $\infectionRate\numberOfVertices/4$ a.a.s and by \Cref{thm:starLowerTime} it then survives a.a.s. for a time of at least $2^{\bigTheta{\sqrt{\infectionThreshold}}}$. As $\infectionThreshold \in \smallOmega{\log(\numberOfVertices)^4}$, this time is super-polynomial in $\numberOfVertices$ which concludes the proof for the first case.

    Now consider $\infectionRate \in \bigO{\numberOfVertices^{-1/2-\frac{\infectionExponent}{2(2+\infectionExponent)}}}$. We show the survival time upper bound for $\infectionRate \in \bigTheta{\numberOfVertices^{-1/2-\frac{\infectionExponent}{2(2+\infectionExponent)}}}$ and it follows for smaller $\infectionRate$ because of the linearity of the survival time. Note that for $\infectionExponent > -2$ it holds $-1/2-\frac{\infectionExponent}{2(2+\infectionExponent)}>-1$ and hence $\infectionRate \in \smallOmega{\numberOfVertices^{-1}}$. Also for all $\infectionExponent \in \R \setminus [-2,-1]$ it holds $-1/2-\frac{\infectionExponent}{2(2+\infectionExponent)}<-\infectionExponent/(1+\infectionExponent)$ and $-1/2-\frac{\infectionExponent}{2(2+\infectionExponent)}<0$ which imply $\infectionRate \in \smallO{\numberOfVertices^{-\infectionExponent/(1+\infectionExponent)}}$ and $\infectionRate < 1$ for sufficiently large $\numberOfVertices$. Hence \Cref{lem:starUpper} is applicable which gives us that $\E{T} \leq 2^{\bigTheta{\infectionThreshold}}\log( \numberOfVertices)$. Noting that for $\infectionRate \in \bigTheta{\numberOfVertices^{-1/2-\frac{\infectionExponent}{2(2+\infectionExponent)}}}$, $\infectionThreshold \in \bigTheta{1}$, concludes the proof.
\end{proof}

\section*{Acknowledgments}

The authors thank George Skretas for helpful discussions and his guidance on the literature of influence maximization.

Andreas Göbel was funded by the project PAGES (project No. 467516565) of the German Research Foundation (DFG).

Marcus Pappik was funded by the HPI Research School on Data Science and Engineering.

\printbibliography

\end{document}